\newtheorem{proposition}{Proposition}
\newtheorem{definition}{Definition}
\newtheorem{theorem}{Theorem}
\newcommand{\bs}{\boldsymbol{s}}
\newcommand{\blambda}{\boldsymbol{\lambda}}
\newcommand{\bc}{\boldsymbol{c}}
\newcommand{\bq}{\boldsymbol{q}}
\newcommand{\by}{\boldsymbol{y}}
\newcommand{\bv}{\boldsymbol{v}}
\newcommand{\bz}{\boldsymbol{z}}
\newcommand{\bx}{\boldsymbol{x}}
\newcommand{\bS}{\boldsymbol{\mathcal{S}}}
\newcommand{\bA}{\boldsymbol{A}}
\newcommand{\bcS}{\boldsymbol{\mathcal{S}}}
\newcommand{\bcA}{\boldsymbol{\mathcal{A}}}
\newcommand{\bM}{\mathbf{M}}
\newcommand{\bB}{\boldsymbol{B}}
\def\argmin{\mathop{\mathrm{arg\,min}}} 
\newcommand{\prox}{\textbf{prox}}
\newcommand{\R}{\mathbb{R}}
\newcommand{\N}{\mathbb{N}}
\newcommand{\C}{\mathbb{C}}
\newcommand{\D}{\dot{\bM}}
\newcommand{\DD}{\ddot{\bM}}
\newcommand{\dd}{\mathrm{d}}
\newcommand{\indic}{\imath}
\newif\ifdraftvar
\begin{document}
\graphicspath{{figures/}}
\newcommand{\colvec}[2][.8]{%
  \scalebox{#1}{$\begin{array}{@{}c@{}}#2\end{array}$}}%

\title{Gradient waveform design for variable density sampling in Magnetic Resonance Imaging}

\author{Nicolas~Chauffert,~\IEEEmembership{}
        Pierre~Weiss,~\IEEEmembership{}
        Jonas~Kahn~\IEEEmembership{}
        and~Philippe~Ciuciu.~\IEEEmembership{}
\thanks{N. Chauffert and P. Ciuciu are with Inria Parietal Team / NeuroSpin center, CEA Saclay. Contact: nicolas.chauffert@gmail.com}
\thanks{P. Weiss is with PRIMO Team, ITAV, USR 3505, Universit\'e de Toulouse.}
\thanks{J. Kahn is with Laboratoire Painlev\'e, UMR 8524, Universit\'e de Lille 1, CNRS.}
}

%

\maketitle

\begin{abstract}
Fast coverage of $k$-space is a major concern to speed up data acquisition in Magnetic Resonance Imaging~(MRI) and limit image distortions due to long echo train durations. The hardware gradient constraints~(magnitude, slew rate) must be taken into account to collect a sufficient amount of samples in a minimal amount of time. However, sampling strategies (e.g., Compressed Sensing) and optimal gradient waveform design have been developed separately so far. The major flaw of existing methods is that they do not take the sampling density into account, the latter being central in sampling theory. In particular, methods using optimal control tend to agglutinate samples in high curvature areas. 
In this paper, we develop an iterative algorithm to project any parameterization of $k$-space trajectories onto the set of feasible curves that fulfills the gradient constraints. We show that our projection algorithm provides a more efficient alternative than existinf approaches and that it can be a way of reducing acquisition time while maintaining sampling density for piece-wise linear trajectories.
\end{abstract}

\begin{IEEEkeywords}
gradient waveform design, $k$-space trajectories, variable density sampling, gradient hardware constraints, magnetic resonance imaging
\end{IEEEkeywords}

\IEEEpeerreviewmaketitle

\section{Introduction}
\IEEEPARstart{T}{he} advent of new hardware and sampling theories provide unprecedented opportunities to reduce acquisition times in MRI.
The design of gradient waveforms minimizing the acquisition time while providing enough information to reconstruct distortion-free images is however an important challenge.
Ideally, these two concerns (sampling scheme and gradient waveform designs) should be addressed simultaneously but it is unclear how to design a feasible waveform corresponding to a $k$-space sampling with good space coverage properties. The gradient waveform design is thus generally performed sequentially: a first step aims at finding the trajectory support or at least control points, and a second step consists of designing the fastest gradient waveforms to traverse the trajectory.

Recent progresses in compressed sensing~(CS) \cite{Puy11,Chauffert14,Krahmer12,Adcock13} give hints concerning the best strategies of $k$-space sampling. In particular, the $k$-space has to be sampled with a variable density, decaying from the lowest to the highest frequencies. To date, CS sampling schemes are limited to simple trajectories, such as 2D independent sampling and acquisition in the orthogonal readout direction. The design of physically plausible acquisition schemes with a prescribed sampling density remains an open question. To better understand how the proposed approach goes beyond the state-of-the-art, we start with a short review of existing methods for designing gradient waveforms for k-space sampling. We show that existing methods do not encompass the case of curves with high curvature in terms of sampling density and scanning time, justifying a new strategy for designing gradient waveforms.

\subsection{Related works}
Some heuristic methods have been proposed to design simultaneously the sampling density and the gradient waveforms~\cite{Mir04,Dale04} but these are not computationally efficient and it is not clear how to sample the $k$-space according to a prescribed distribution using these methods. Let us mention other algorithms that have been proposed to traverse specific curve shapes, such as spiral~\cite{gurney06,pipe14} or concentric rings~\cite{wu08}.


It is in general easier to consider a sampling trajectory in a first time and design the gradient waveform afterwards. To the best of our knowledge, the latter problem is currently solved using convex optimization~\cite{simonetti93,Hargreaves04}, optimal control~\cite{Lustig08}, or optimal interpolation of $k$-space control points~\cite{Davids14b}. Given an input curve, both methods find a parameterization that minimizes the time required to traverse the curve subject to kinematic constraints. This principle suffers from two limitations in certain situations. First, reparameterizing the curve changes the density of samples along the curve. This density is now known to be a key factor in compressed sensing~\cite{Puy11,Chauffert14,Krahmer12,Adcock13}, since it directly impacts the number of required measurements to ensure accurate reconstruction. Second, the challenge of rapid acquisitions is to reduce the scanning time~(echo train duration) and limit geometric distortions induced by inhomogeneities of the static magnetic field~($B_0$) by covering the $k$-space as fast as possible. The perfect fit to an arbitrary curve may be time consuming, especially in the high curvature parts of the trajectory. In particular, the time to traverse piecewise linear sampling trajectories~\cite{Chauffert14,Chauffert13b,Chauffert13c,Wang12,Willett} may become too long. Indeed, the magnetic field gradients have to be set to zero at each singular point of such trajectory. For this reason, new methods have to be pushed forward to fill this gap.

\subsection{Contributions}
In this paper, we propose an alternative method based on a convex optimization formulation.
Given any \emph{parameterized} curve, our algorithm returns the \emph{closest} curve that fulfills the gradient constraints.
The main advantages of the proposed approach are the following: i) the time to traverse the $k$-space is usually shorter, ii) the distance between the input and output curves is the quantity to be minimized ensuring a low deviation to the original sampling distribution, iii) the maximal acquisition time may be fixed, enabling to find the closest curve in a given time and iv) it is flexible enough to handle additional hardware constraints (e.g., trajectory starting from the $k$-space center) in the same framework. 

\subsection{Paper organization}
In Section~\ref{sec:design}, we review the formulation of MRI acquisition, by recalling the gradient constraints and introducing the projection problem. Then, in Section~\ref{sec:VDS}, it is shown that curves generated by the proposed strategy (initial parameterization + projection onto the set of physical constraints) may be used to design MRI sampling schemes with locally variable densities. In Section~\ref{sec:resol}, we provide an optimization algorithm to solve the projection problem, and estimate its rate of convergence. Finally, we illustrate the behaviour of our algorithm on two particular cases: rosette and Travelling Salesman Problem-based trajectories~(Section~\ref{sec:illustr}).

\section{Design of $k$-space trajectories using physical gradient waveforms.}
\label{sec:design}
In this section, we recall classical modelling of the acquisition constraints in MRI~\cite{Hargreaves04,Lustig08}. We justify the lack of accuracy of current reparameterization methods in the context of variable density sampling, and motivate the introduction of a new \emph{projection} algorithm.

\subsection{Sampling in MRI}

In MRI, images are sampled in the $k$-space domain along parameterized curves $s : [0, T] \mapsto \R^d $ where $d\in \{2,3\}$ denotes the image dimensions.
The $i$-th coordinate of $s$ is denoted $s_i$.
Let $u:\R^d\to \C$ denote a $d$ dimensional image and $\hat u$ be its Fourier transform. 
Given an image $u$, a curve $s:[0,T]\to \R^d$ and a sampling step $\Delta t$, the image $u$ shall be reconstructed using the set\footnote{For ease of presentation, we assume that the values of $u$ in the $k$-space correspond to its Fourier transform and we neglect distorsions occuring in MRI such as noise.}: 
\begin{equation}\label{eq:samplingset}
\mathcal{E}=\left\{\hat u(s(j\Delta t)), 0\leqslant j \leqslant \left\lfloor \frac{T}{\Delta t} \right\rfloor\right\}.
\end{equation}

\subsection{Gradient constraints}

The gradient waveform associated with a curve $s$ is defined by $ g(t)=\gamma^{-1} \dot{s} (t)$, where $\gamma$ denotes the gyro-magnetic ratio~\cite{Hargreaves04}.
The gradient waveform is obtained by energizing gradient coils (arrangements of wire) with electric currents.

\subsubsection{kinematic constraints}
due to obvious physical constraints, these electric currents have a bounded amplitude and cannot vary too rapidly~(slew rate). Mathematically, these constraints read: 
\begin{align*}
\|g\| \leqslant G_{\max} \qquad \mbox{and} \qquad \|\dot g\| \leqslant S_{\max}
\end{align*}
where $\|\cdot\|$ denotes either the $\ell^\infty$-norm defined by $\|f\|_\infty:=\max_{1\leq i \leq d}\sup_{t\in [0,T]} |f_i(t)|$, or the $\ell^{\infty,2}$-norm defined by $\|f\|_{\infty,2}:=\sup_{t\in [0,T]} \bigl(\sum_{i=1}^d |f_i(t)|^2\bigr)^{\frac{1}{2}}$.
These constraints might be Rotation Invariant~(RIV) if $\|\cdot\| = \|\cdot\|_{\infty,2}$ or Rotation Variant~(RV) if $\|\cdot\| = \|\cdot\|_{\infty}$, depending on whether each gradient coil is energized independently from others or not.
The set of kinematic constraints is denoted $\mathcal{S}$:
\begin{align}
\mathcal{S}:=\left\{s\in \left(\mathcal{C}^2([0,T])\right)^d, \|\dot s\| \leqslant \alpha , \|\ddot s\| \leqslant \beta\right\}.
\end{align}

\subsubsection{Additional affine constraints}
Specific MRI acquisitions may require additional constraints, such as:
\begin{itemize}
\item
Imposing that the trajectory starts from the $k$-space center (i.e., $s(0)=0$) to save time and avoid blips. 
The end-point can also be specified by $s(T)=s_T$, if $s_T$ can be reached during travel time $T$.

\item In the context of multi-shot MRI acquisition, several radio-frequency pulses are necessary to cover the whole $k$-space. Hence, it makes sense to enforce the trajectory to start from the $k$-space center at every $TR$ (repetition time):
$s(k \cdot TR)=0 , 0\leqslant k\leqslant \left\lfloor\frac{T}{TR}\right\rfloor $.

\item In addition to starting from the $k$-space center, one could impose starting at speed $0$, i.e. $\dot s(0)=0$.

\item To avoid artifacts due to flow motion in the object of interest, gradient moment nulling~(GMN) techniques have been introduced in~\cite{majewski10}. In terms of constraints, nulling the $i^{\text{th}}$ moment reads $\int_t t^i g(t) \dd t=0$.
\end{itemize}

Each of these constraints can be modeled by an affine relationship. Hereafter, the set of affine constraints is denoted by $\mathcal{A}$:
$$\mathcal{A}:=\left\{s: [0,T]\to \R^d, A(s)=v \right\},$$
where $v$ is a vector of parameters in $\R^p$ ($p$ is the number of additional constraints) and $A$ is a linear mapping from the curves space to $\R^p$.

A sampling trajectory $s:[0,T]\to \R^d$ will be said to be \emph{admissible} if it belongs to the set $\mathcal{S}\cap\mathcal{A}$. In what follows, we assume that this set is non-empty, i.e. $\mathcal{S}\cap\mathcal{A}\neq \emptyset$. Moreover, we assume, without loss of generality, that the linear constraints are independent (otherwise some could be removed). 

\subsection{Finding an optimal reparameterization}

The traditional approach to design an admissible curve $s\in \mathcal{S}$ given an arbitrary curve $c:[0,T]\to\R^d$ consists of finding a reparameterization $p$ such that $s=c\circ p$ satisfies the physical constraints while minimizing the acquisition time. This problem can be cast as follows:
\begin{align}\label{eq:reparam}
T_{\texttt{Rep}}=\min T'  \text{ such that }  \exists\, p:[0,T'] \mapsto [0,T],\; c \circ p \in \mathcal{S}.
\end{align}
It can be solved efficiently using optimal control~\cite{Lustig08} or convex optimization~\cite{Hargreaves04}.	
The resulting solution $s=c\circ p$ has the same support as $c$. 
This method however suffers from an important drawback when used in the CS framework: it does not provide any control on the density of samples along the curve. For example, for a given curve support shown in Fig.~\ref{fig:illustr_proj}(a), we illustrate the new parameterization (keeping the same support) and the corresponding magnetic field gradients (see Fig.~\ref{fig:illustr_proj}(b) for a discretization of the curve and (c) for the gradient). We notice that the new parameterized curve has to stop at every angular point of the trajectory, yielding more time spent by the curve in the neighbourhood of these points (and more points in the discretization of the curve in Fig.~\ref{fig:illustr_proj}(b)). This phenomenon is likely to modify the sampling distribution, as illustrated in Section~\ref{sec:VDS}. 


The next part is dedicated to introducing an alternative method relaxing the constraint of keeping the same support as $c$.

\subsection{Projection onto the set of constraints}

The idea we propose in this paper is to find the projection of the given input curve $c$ onto the set of admissible curves $\mathcal{S}$:
\begin{align}
\label{pb:primal}
s^*:=\underset{s\in \mathcal{S}}{\operatorname{argmin}} \frac{1}{2} d^2(s,c) = \underset{s\in \mathcal{S}}{\operatorname{argmin}} \frac{1}{2} \|s-c\|_2^2
\end{align}
where $d^2(s,c)=\|s-c\|_2^2:=\int_{t=0}^T \|s(t)-c(t)\|_2^2 \,dt$. 
This method presents important differences compared to the optimal control approach introduced here above: i) the solution $s^*$ and $c$ have different support~(see Fig.~\ref{fig:illustr_proj}(d)) unless $c$ is admissible; ii) the sets composed of the discretization of $c$ and $s^*$ at a given sampling rate are close to each other (Fig.~\ref{fig:illustr_proj}(e)); iii) the acquisition time $T$ is fixed and equal to that of the input curve $c$. In particular, time to traverse a curve is in general shorter than using exact parameterization~(see Fig.~\ref{fig:illustr_proj}(f) where $T<T_{\texttt{Rep}}$). 

\begin{figure}[!ht]
\begin{tabular}{cccc}
(a)&(b)& &(c)\\[-.01 \linewidth]
\includegraphics[width=.26\linewidth]{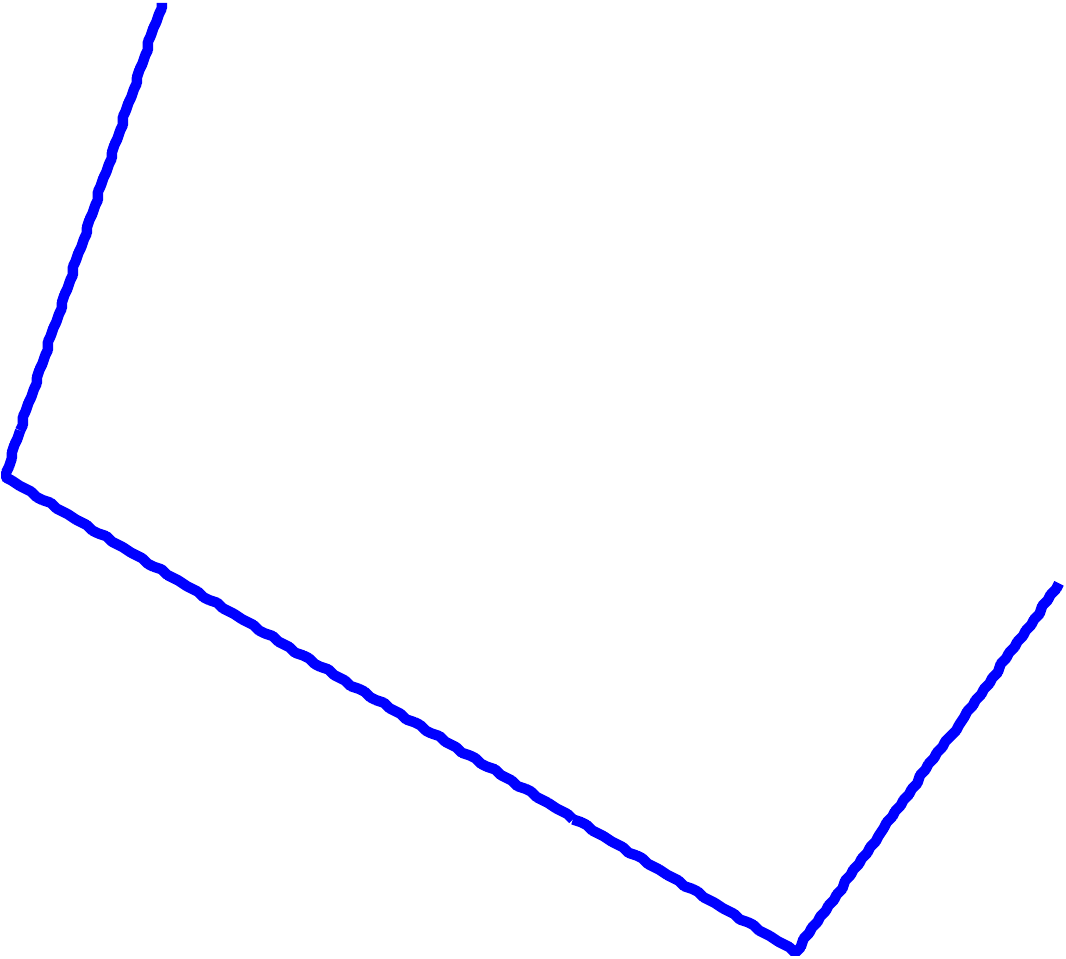}&
\includegraphics[width=.26\linewidth]{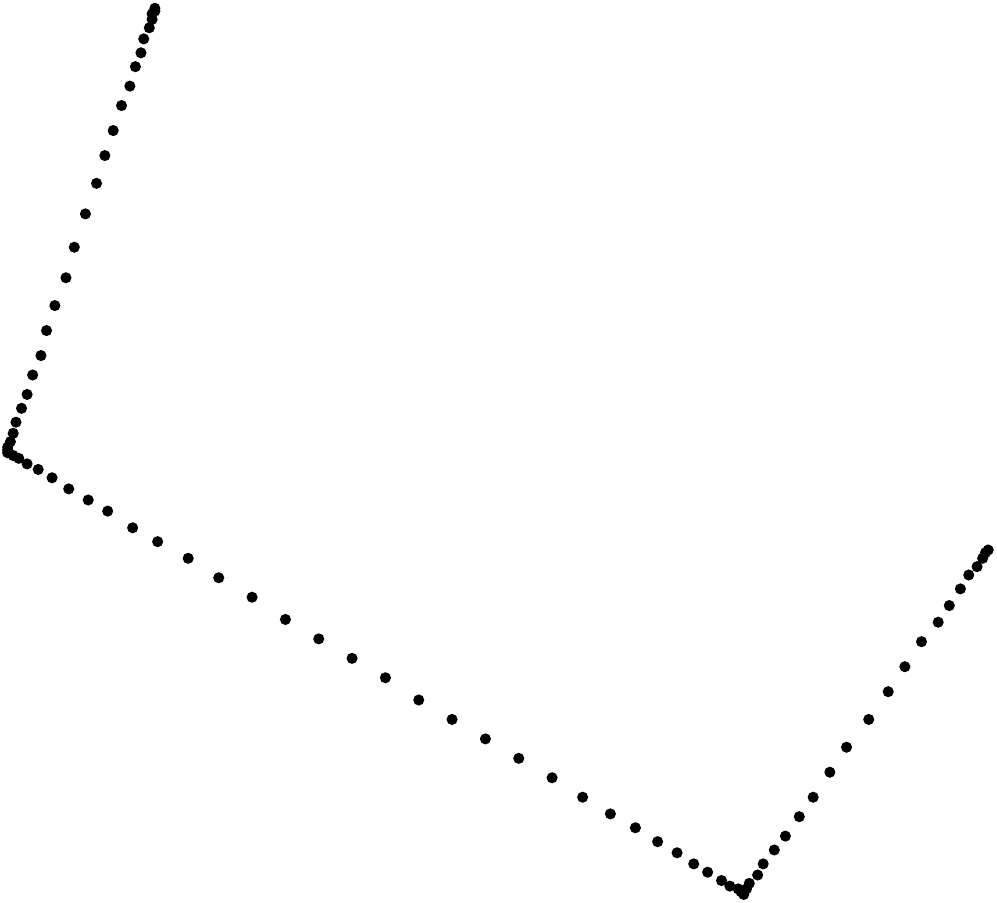}&
\hspace{-.03\linewidth}
\rotatebox{90}{\hspace{0.08\linewidth} $g(t)$} &
\hspace{-.052\linewidth}
\includegraphics[width=.33\linewidth]{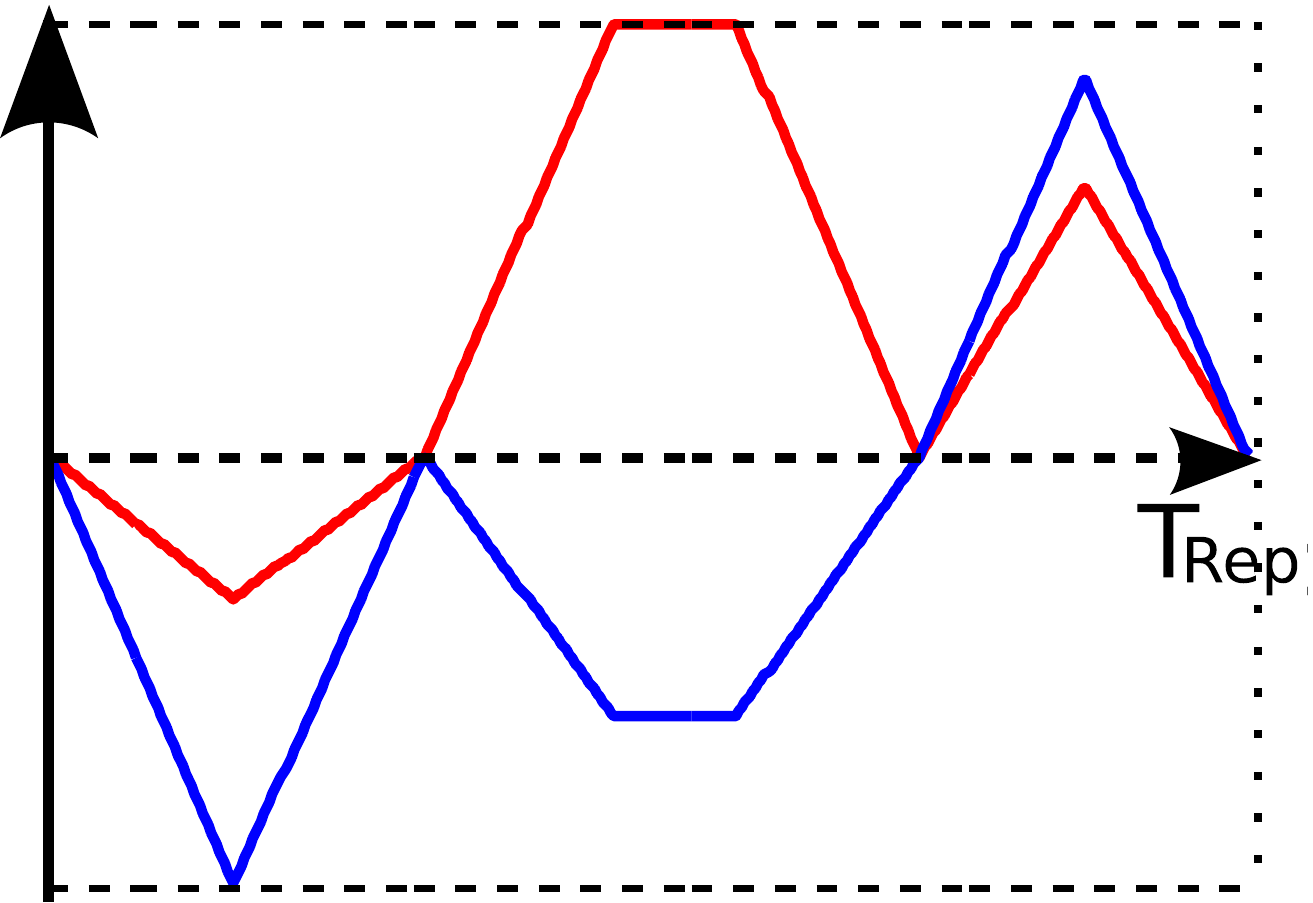}\\[-.01 \linewidth]
 & & & $t$ \\
(d)&(e)& &(f)\\[-.01 \linewidth]
\includegraphics[width=.26\linewidth]{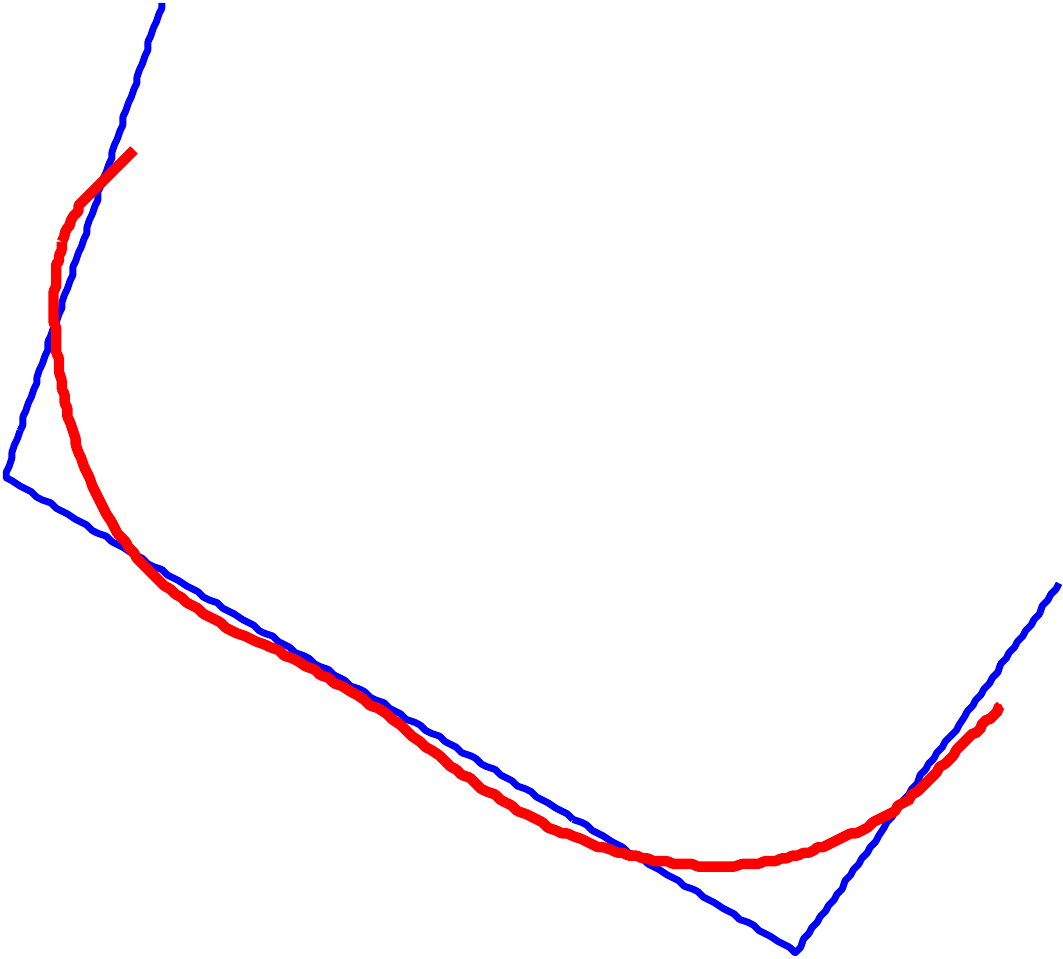}&
\includegraphics[width=.26\linewidth]{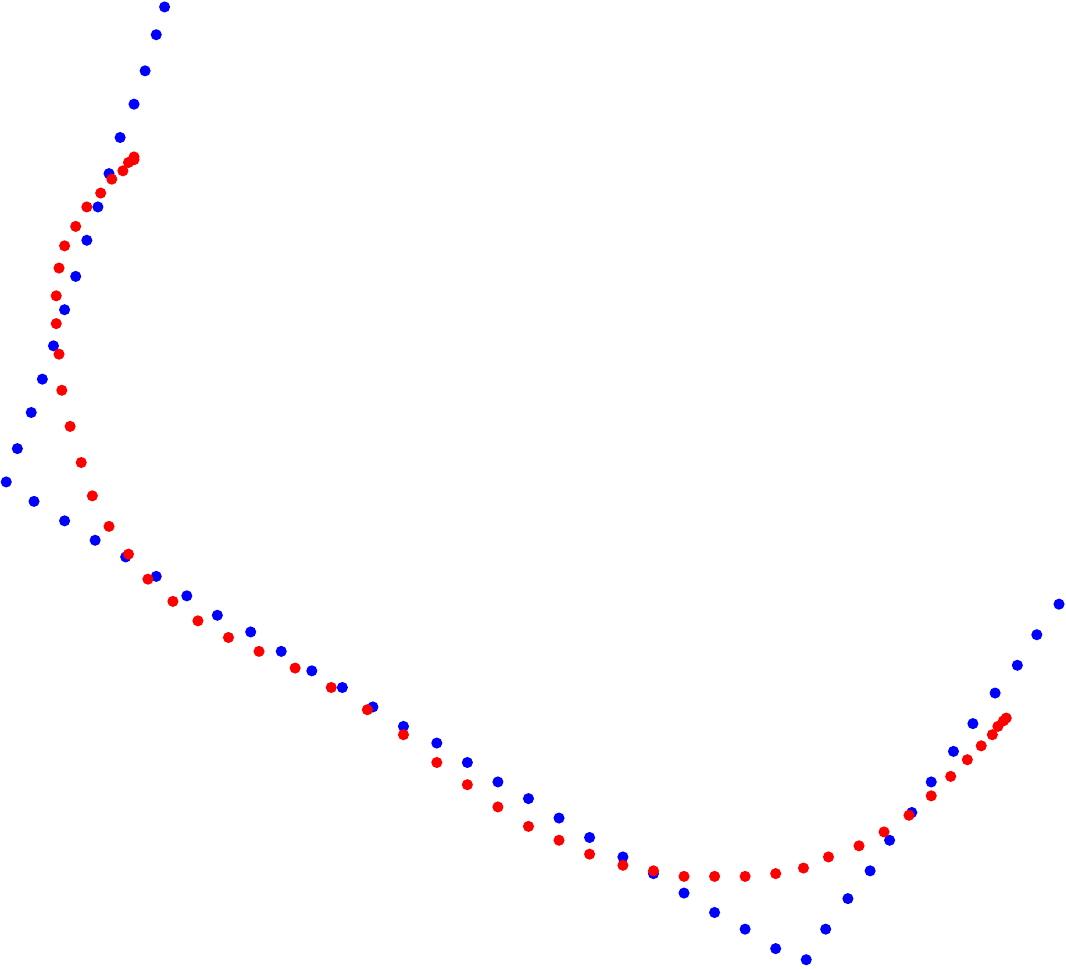}&
\hspace{-.03\linewidth}
\rotatebox{90}{\hspace{0.08\linewidth} $g(t)$} &
\hspace{-.052\linewidth}
\includegraphics[width=.33\linewidth]{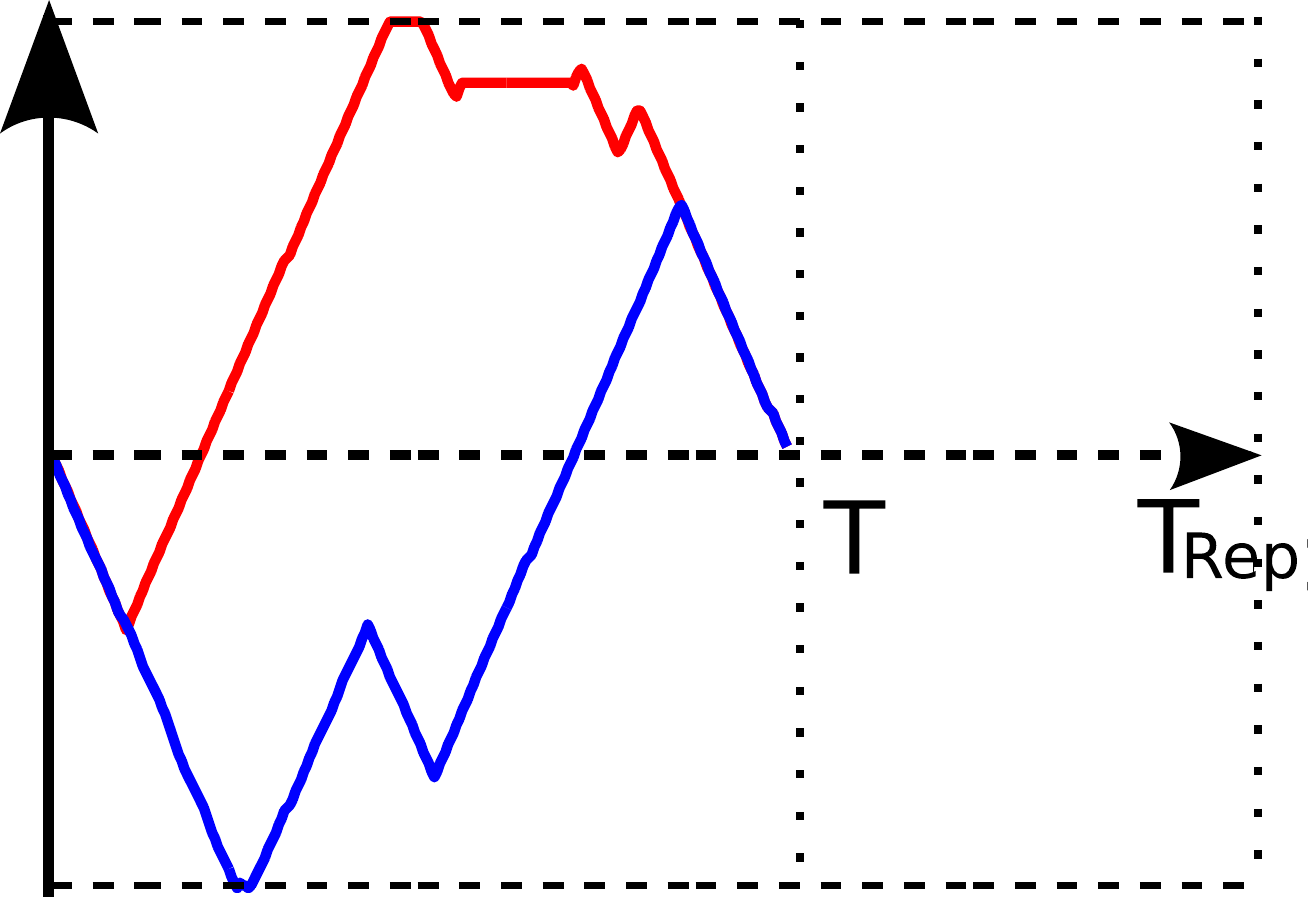}\\[-.01 \linewidth]
 & & & $t$ \\
\end{tabular}
\caption{\label{fig:illustr_proj} Comparison of two methods to design gradient waveforms.
\textbf{Top row:} Optimal control-based parameterization~\cite{Lustig08}. (a): input curve support. (b): discrete representation 
of the optimal reparameterization of the curve in $\mathcal{S}$. (c): corresponding gradient waveforms
($\color{red}{g_x}, \color{blue}{g_y}$). Dashed lines correspond to $0$ and +/- $G_{\max}$. \textbf{Bottom row:}
Illustration of the projection algorithm. (d): the same \textcolor{blue}{input} curve as in
(a) parameterized at maximal speed, and the support of the
\textcolor{red}{projected} curve onto the set $\mathcal{S}$. (e): discrete representation of the
\textcolor{blue}{input} and \textcolor{red}{projected} curves. (f): corresponding gradient waveforms
($\color{red}{g_x}, \color{blue}{g_y}$) with the same time scale as in (c): time 
to traverse the curve is reduced by \emph{39\%}.}
\end{figure}

In the next section, we will explain why the empirical distribution of the curves obtained by this projection method is close to the one of the input curve, and we illustrate how the parameterization will distort the sampling distribution.


\section{Control of the sampling density}
\label{sec:VDS}


Recent works have emphasized the importance of the sampling density~\cite{Chauffert14,Puy11,Krahmer12,Adcock13} in an attempt to reduce the amount of acquired data while preserving image quality at the reconstruction step. The choice of an accurate distribution $p$ is crucial since it directly impacts the number of measurements required~\cite{Rauhut10}.
In this paper, we will denote by $\pi$ a distribution defined over the $k$-space $K$. The profile of this distribution can be obtained by theoretical arguments~\cite{Chauffert14,Puy11,Krahmer12,Adcock13} leading to distributions as the one depicted in Fig.~\ref{fig:sampling_distribution}(a). Some heuristical distributions are known to perform well in CS-MRI experiments~(Fig.~\ref{fig:sampling_distribution}(b)). A comparison between these two approaches can be found in~\cite{Chauffert13}. 

\begin{figure}[!ht]
\begin{center}
\begin{tabular}{cc}
(a)&(b)\\
\includegraphics[width=.3\linewidth]{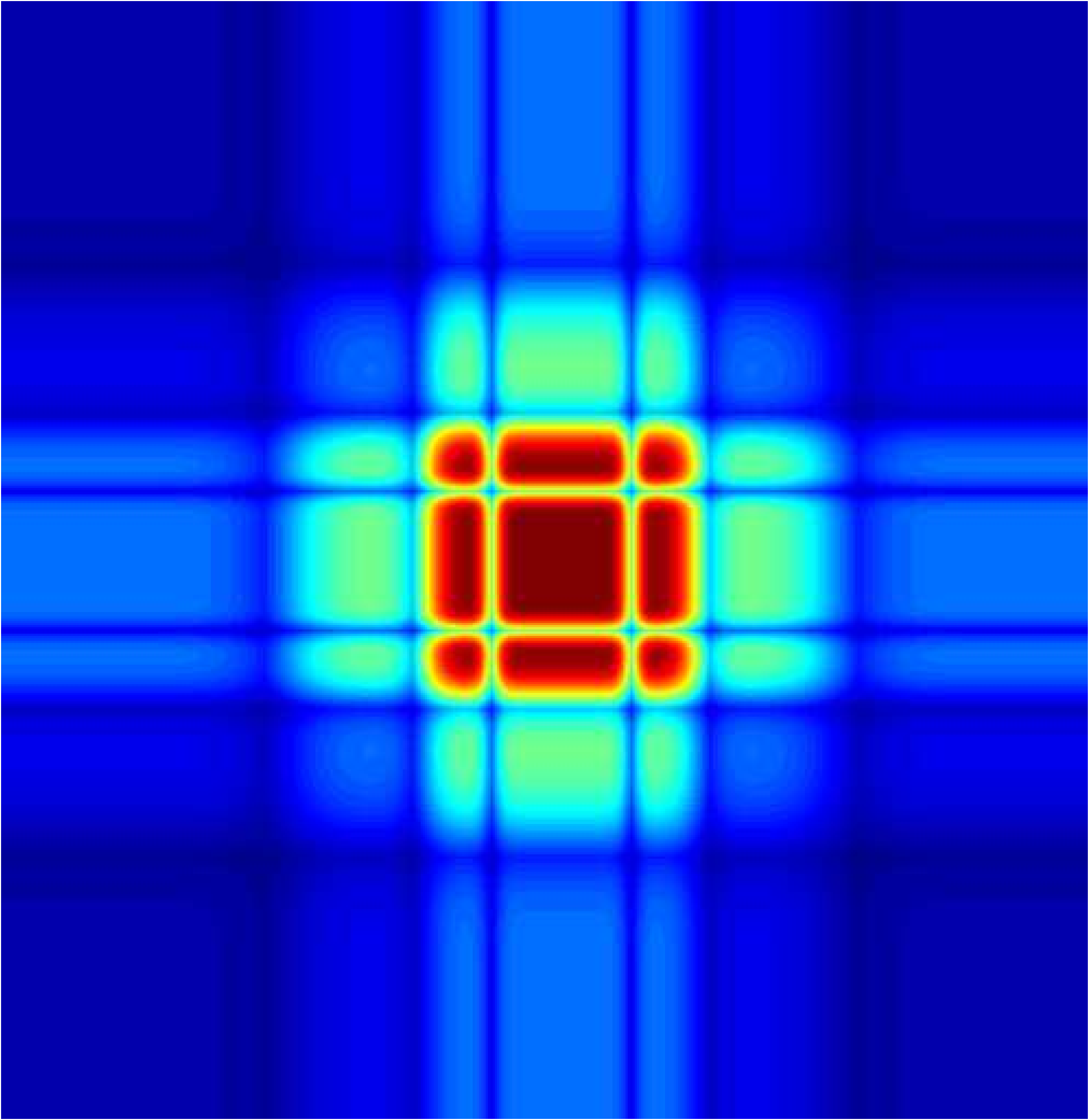}&
\includegraphics[width=.3\linewidth]{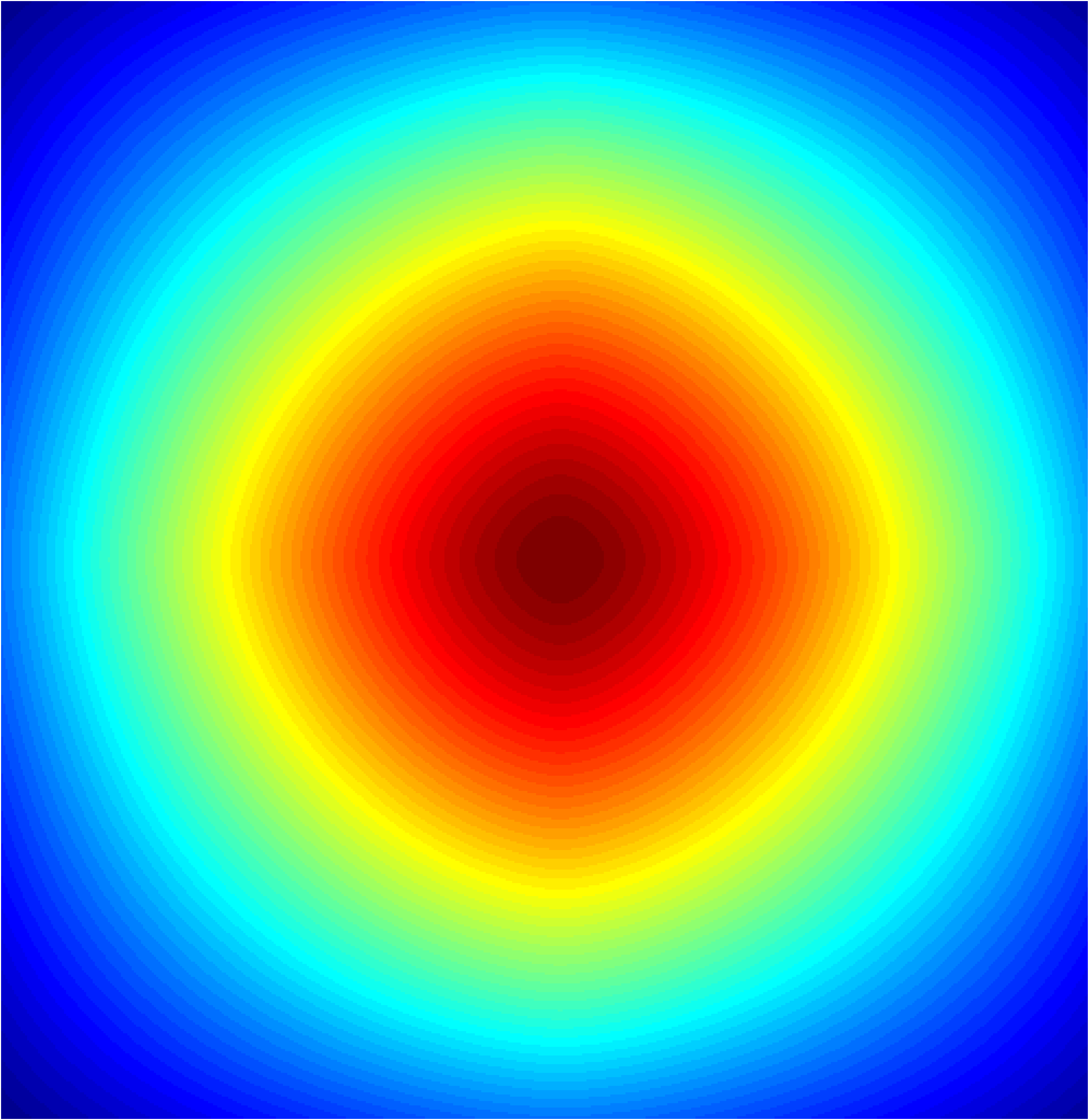}
\end{tabular}
\end{center}
\caption{\label{fig:sampling_distribution} Examples of 2D sampling distribution. (a): optimal distribution for a Symmlet transform~\cite{Puy11,Chauffert14}. (b): radial distribution advocated in~\cite{Lustig07}: $p(k)\propto (1-|k|/k_{\max})^p$ with $p=3$.}
\end{figure}

However, designing a trajectory that performs sampling according to a fixed distribution while satisfying gradient constraints, is really challenging and has never been adressed so far.
Recent attemps did not manage gradient constraints to design continuous variable density sampling trajectories since the sampling curve cannot be traversed at constant speed, especially at angular points as shown in Fig.~\ref{fig:illustr_proj}(b). This impacts the sampling density by concentrating probability mass in the portions of the curve associated with large curvature. 
In this section, we show that if we start from an inadmissible trajectory that covers the $k$-space with an empirical distribution close to the target one, then we can figure out how to design an admissible sampling trajectory with almost the same empirical distribution.
 Strategies to design such input curves are detailed in Appendix~\ref{sec:w2}. In what follows, we first derive theoretical guarantees, and then, we perform experiments to quantify the distribution distortion.

\subsection{Theoretical guarantees}
\label{sec:VDS1}
Let us show that the sampling distribution carried over a curve that our algorithm delivers can be close to a fixed target distribution. To this end, we start with the definition of the empirical distribution of a curve, and then we introduce a distance defined over distributions. 

\begin{definition}[Empirical measure of a curve]
Let $\gamma$ denote the Lebesgue measure and $\gamma_T=\frac{\gamma}{T}$ denote the Lebesgue measure normalized on the interval $[0,T]$. The \emph{empirical measure} of a curve $s:[0,T]\mapsto K \subseteq \R^d$ is defined for any measurable set $\omega$ of $K$ as:
\begin{align*}
P_s(\omega)=\gamma_T(s^{-1}(\omega)).
\end{align*}
\end{definition}

This means that the mass of a set $\omega$ is proportional to the time spent by the curve in $\omega$.
The natural distance arising in our work is the between-distribution Wasserstein distance $W_2$ defined hereafter:
\begin{definition}[Wasserstein distance $W_2$]
Let $M$ be a domain of $\R^d$ and $\mathcal{P}(M)$ be the set of measures over $M$. For $\mu,\nu \in \mathcal{P}(M)$, $W_2$ is defined as:
\begin{align}
W_2(\mu,\nu)= \left(\inf_{\gamma \in \Pi(\mu,\nu)} \int \|x-y\|_2^2 \dd \gamma(x,y) \right)^{\frac{1}{2}}
\end{align}
where $\Pi\subset \mathcal{P}(M\times M)$ denote the set of measures over $M\times M$ with marginals $\mu$ and $\nu$ on the first and second factors, respectively.
\end{definition}
$W_2$ is a distance over $\mathcal{P}(M)$~(see e.g., \cite{villani2008optimal}). Intuitively, if $\mu$ and $\nu$ are seen as mountains, the distance is the minimum cost of moving the mountains of $\mu$ into the mountains of $\nu$, where the cost is the $\ell_2$-distance of transportation multiplied by the mass moved. Hence, the coupling encodes the deformation map to turn one distribution into the other.

A global strategy to design feasible trajectories with an empirical distribution close to a target density $\pi$ relies on:
\begin{enumerate}
\item finding an input curve $c$, such that $P_c$ is close to $\pi$;
\item estimating $s^*$ the projection of $c$ onto the set of constraints, by solving Eq.~\eqref{pb:primal}.
\end{enumerate}

The objective is to show that $W_2(P_{s^*},\pi)$ is small. Since $W_2$ is a norm, the triangle inequality holds:
\begin{align}
\label{eq:triangle}
W_2(P_{s^*},\pi) \leqslant \underbrace{W_2(P_{s^*},P_c)}_{Distortion} + \underbrace{W_2(P_c,\pi)}_{Initial\ guess}.
\end{align}

The \emph{initial guess} term $W_2(P_c,\pi)$ can be as small as possible if $c$ is a Variable Density Sampler~\cite{Chauffert14}. The proof is postponed to Appendix~\ref{sec:w2} for the ease of reading. Here, we are interested in the \emph{distortion} term $W_2(P_{s^*},P_c)$. The following proposition shows that the $W_2$ distance between the empirical distributions of the input curve and of the output curve ($c$ and $s^*$) is controlled by the quantity $d(s^*,c)$ to be minimized when solving Eq.~\eqref{pb:primal}.
\begin{proposition}
\label{prop:W2} For any two curves $s$ and $c:[0,T] \to \R^d$:
\begin{align*}
W_2(P_c,P_s) \leqslant d(s,c).
\end{align*}
\end{proposition}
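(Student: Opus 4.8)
The plan is to build an explicit transport plan between $P_c$ and $P_s$ and to estimate its cost, exploiting the fact that $W_2(\mu,\nu)^2$ is an \emph{infimum} over couplings, so that any admissible coupling already provides an upper bound. The natural candidate is the ``synchronous'' coupling that moves each point $c(t)$ directly onto $s(t)$, i.e. transports mass along the graph of the pair $(c,s)$.

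Concretely, I would introduce the map $\Phi:[0,T]\to K\times K$, $\Phi(t)=(c(t),s(t))$, and set $\Gamma:=\Phi_\sharp\gamma_T$, the pushforward of the normalized Lebesgue measure $\gamma_T$ on $[0,T]$. Since $c$ and $s$ are continuous, $\Phi$ is measurable and $\Gamma$ is a well-defined element of $\mathcal P(K\times K)$. Next I would check it has the prescribed marginals: for any measurable $\omega\subseteq K$,
\begin{align*}
\Gamma(\omega\times K)&=\gamma_T\bigl(\Phi^{-1}(\omega\times K)\bigr)=\gamma_T\bigl(c^{-1}(\omega)\bigr)=P_c(\omega),
\end{align*}
and symmetrically $\Gamma(K\times\omega)=P_s(\omega)$, hence $\Gamma\in\Pi(P_c,P_s)$. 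Its transport cost is then computed by the change-of-variables formula for pushforwards:
\begin{align*}
\int_{K\times K}\|x-y\|_2^2\,\dd\Gamma(x,y)&=\int_0^T\|c(t)-s(t)\|_2^2\,\dd\gamma_T(t)\\
&=\frac1T\int_0^T\|s(t)-c(t)\|_2^2\,\dd t=\frac1T\,d^2(s,c).
\end{align*}
Taking the infimum over all couplings gives $W_2(P_c,P_s)^2\le \frac1T d^2(s,c)$, i.e. $W_2(P_c,P_s)\le T^{-1/2}\,d(s,c)$, from which the stated inequality follows as soon as $T\ge 1$ (and in any event this is exactly the control on the \emph{distortion} term that is needed in \eqref{eq:triangle}).

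I do not expect a genuine obstacle here: the argument boils down to the single observation that ``transport along the graph of $(c,s)$'' is an admissible plan whose cost equals the (normalized) $L^2$ distance between the two parameterizations. The only points that deserve a line of care are the measurability of $\Phi$ (so that $\Gamma$ is a bona fide probability measure on $K\times K$ with the right marginals), the change-of-variables step above, and the normalization constant: the clean estimate naturally carries a factor $T^{-1/2}$, which one either absorbs using $T\ge 1$ or simply keeps as the sharper bound. It is also worth stressing that this coupling is generally far from optimal — the optimal plan need not send $c(t)$ to $s(t)$ — so the inequality is expected to be loose, which is harmless for its intended use and actually suggests the distortion may be substantially smaller than $d(s^*,c)$ in practice.
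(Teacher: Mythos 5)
Your proof is correct and takes essentially the same route as the paper's: the paper's $\gamma_{s,c}$ is exactly your synchronous coupling $\Phi_\sharp\gamma_T$, and the argument is the same ``any admissible coupling upper-bounds the infimum'' observation. Your more careful bookkeeping in fact exposes a normalization the paper glosses over: since $d^2(s,c)=\int_0^T\|s(t)-c(t)\|_2^2\,\dd t$ while the coupling carries the $1/T$-normalized Lebesgue measure, the transport cost is $d^2(s,c)/T$ (not $d^2(s,c)$ as written in the paper's display), so the sharp conclusion is $W_2(P_c,P_s)\leqslant T^{-1/2}\,d(s,c)$, and the stated inequality as printed requires $T\geqslant 1$ or a rescaling of $d$.
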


\begin{proof}
In terms of distributions, the quantity $d(s,c)$ reads:
\begin{align}
\label{pb:primal_measure}
d^2(s,c) = \int_{M\times M} \|x-y\|_2^2 \dd\gamma_{s,c}(x,y)
\end{align}
where $\gamma_{s,c}$ is the coupling between the empirical measures $P_s$ and $P_c$ defined for all couples of measure sets $(\omega_1,\omega_2)\in M^2$ by $\displaystyle \gamma_{c,s}(\omega_1,\omega_2)=\frac{1}{T} \int_{t=0}^T 1_{\omega_1}(s(t))1_{\omega_2}(c(t))\dd t$. The choice of this coupling is equivalent to choosing the transformation map as the association of locations of $c(t)$ and $s(t)$ for each $t$.
We notice that the quantity to be minimized in Eq.~\eqref{pb:primal_measure} is an upper bound of $W_2(P_s,P_c)^2$, with the specific coupling $\gamma_{s,c}$. \\
\end{proof}

To sum up, solving the projection problem amounts to minimizing an upper bound of the $W_2$ distance between the target and the empirical distributions of the solution if we neglect the influence of the {\it initial guess} term. 

\subsection{Simulations}

Next, we performed simulations to show that the sampling density is better preserved using our approach than relying on the optimal control approach. For doing so, we use travelling salesman-based~(TSP) sampling trajectories~\cite{Chauffert13c,Chauffert14}, which are an original way to design random trajectories which empirical distribution is any target distribution such as in~Fig.~\ref{fig:MCstudy1}(a). $10000$ such independent TSP were drawn and parameterized with arc-length: note that these parameterizations are not admissible in general. Then, we sampled each trajectory at fixed rate $\Delta t$ (as in~Fig.~\ref{fig:MCstudy1}(b)), to form an histogram (the empirical distribution shown in Fig.~\ref{fig:MCstudy1}(c)), which was compared to $\pi$ in Fig.~\ref{fig:MCstudy1}(d). The error is actually not closed to zero, since the convergence result is asymptotic in the size of the curve that is bounded in this experiment.

\begin{figure}[!ht]
\begin{center}
\begin{tabular}{cc}
(a)&(b)\\[-.01\linewidth]
\includegraphics[width=.3\linewidth]{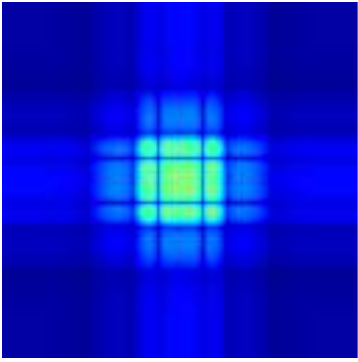}&
\includegraphics[width=.3\linewidth]{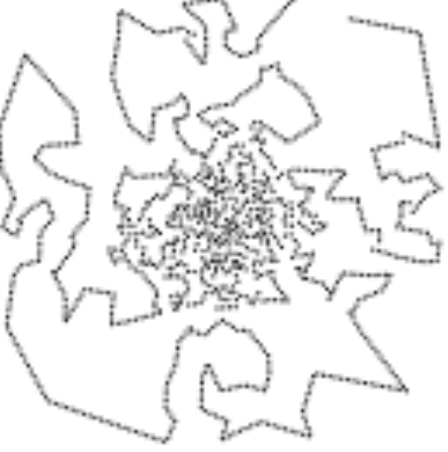}\\
(c)&(d)\\[-.01\linewidth]
\includegraphics[width=.3\linewidth]{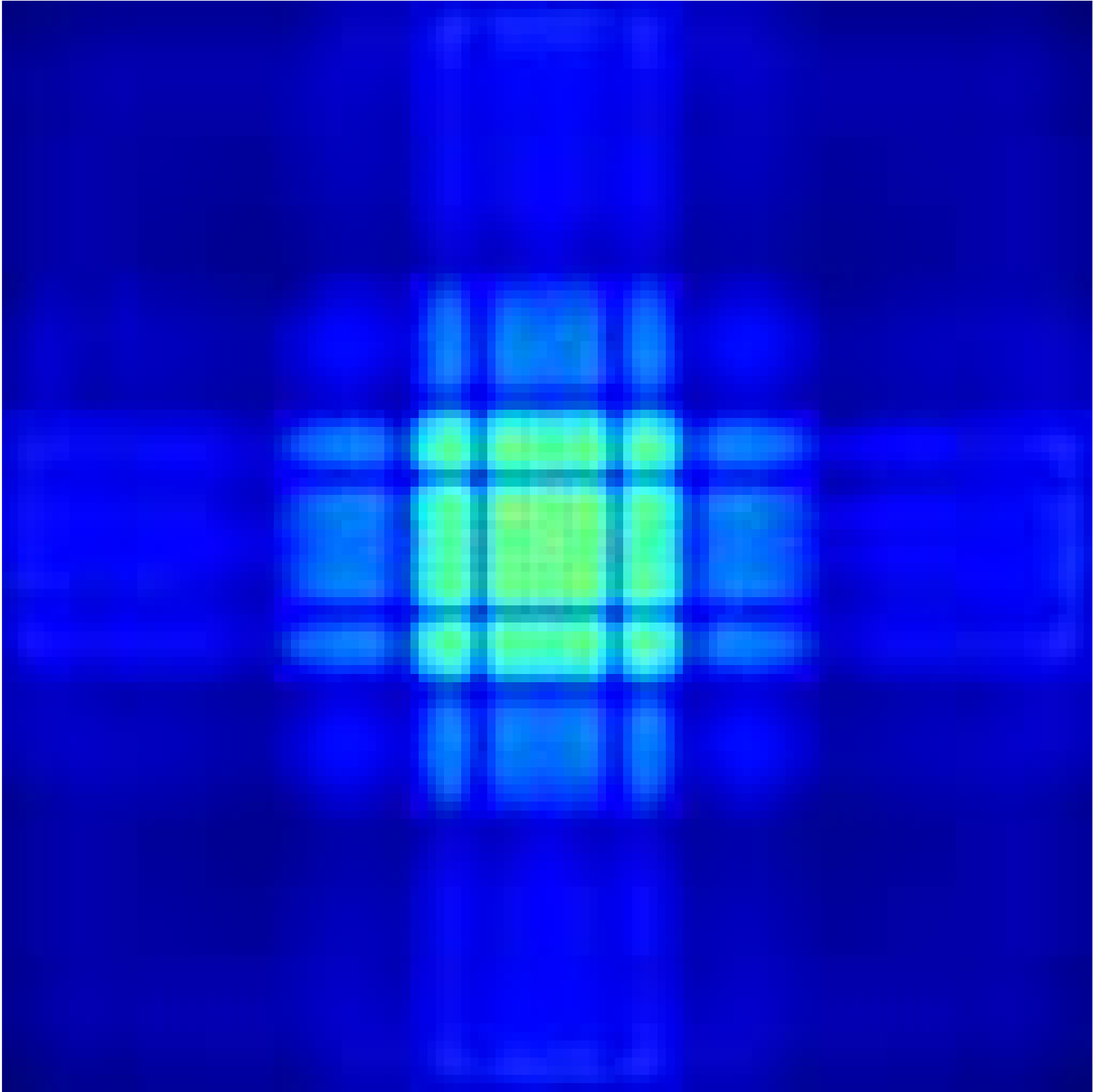}&
\includegraphics[width=.3\linewidth]{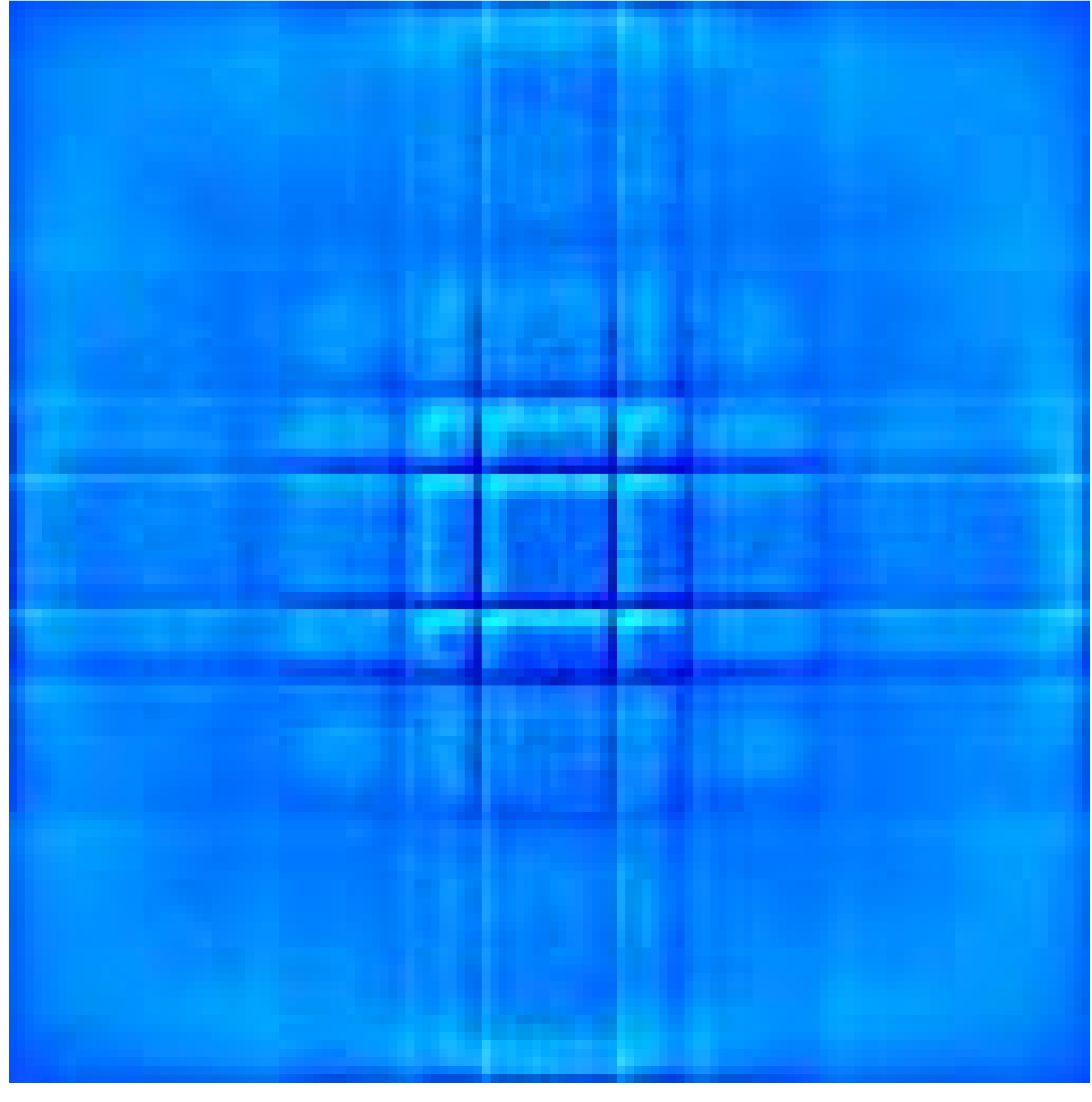}\\[-.01\linewidth]
& {\scriptsize rel. error = 12 \%}\\
\end{tabular}
\end{center}
\caption{\label{fig:MCstudy1} Illustration of TSP VDS. (a) Target distribution. (b) Regular sampling of TSP curve parameterized with its arc-length. (c) Empirical distribution~(histogram of samples). (d) Histogram difference (c)-(a).}
\end{figure}

In Fig.~\ref{fig:MCstudy2}~(top row), we show that the classical reparamete\-rization technique~\cite{Lustig08} lead to a major distortion of the sampling density, because of the behavior on the angular points illustrated in Fig.~\ref{fig:illustr_proj}(b). Then, we considered three constant speed parameterizations and projected them onto the same set of constraints ($G_{\max}=40$ mT.m$^{-1}$ and $S_{\max}=150$ mT.m$^{-1}$.ms$^{-1}$). Among these 3 initial guesses, we first used an initial parameterization with low velocity (10~\% of the maximal speed $\gamma G_{\max}$ with $\gamma=42.576$~MHz.T$^{-1}$), which projection fits the sampling density quite well. Then, we increased the velocity to reach first 50~\% and then 100~\% of the maximal speed. The distortion of the sampling density of the projected curve increased, but remained negligible in contrast to the exact reparameterization. Hence, this example illustrates that if we have a continuous trajectory with an empirical sampling distribution close to the target one, our projection algorithm enables to design feasible waveforms while sampling the $k$-space along a discretized trajectory with an empirical density close to the target one as well.

\begin{figure}
\begin{center}
\begin{tabular}{cccc}
 & {\footnotesize $k$-space trajectory} &  {\footnotesize emp. distribution} &  {\footnotesize diff. with $\pi$ }\\
\rotatebox{90}{\hspace{0.04\linewidth}\footnotesize reparameterization}&
\includegraphics[width=.25\linewidth]{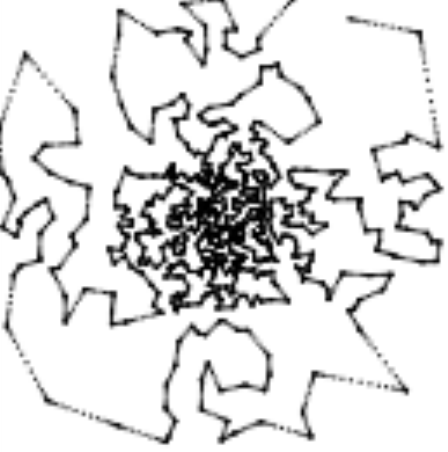}&
\includegraphics[width=.27\linewidth]{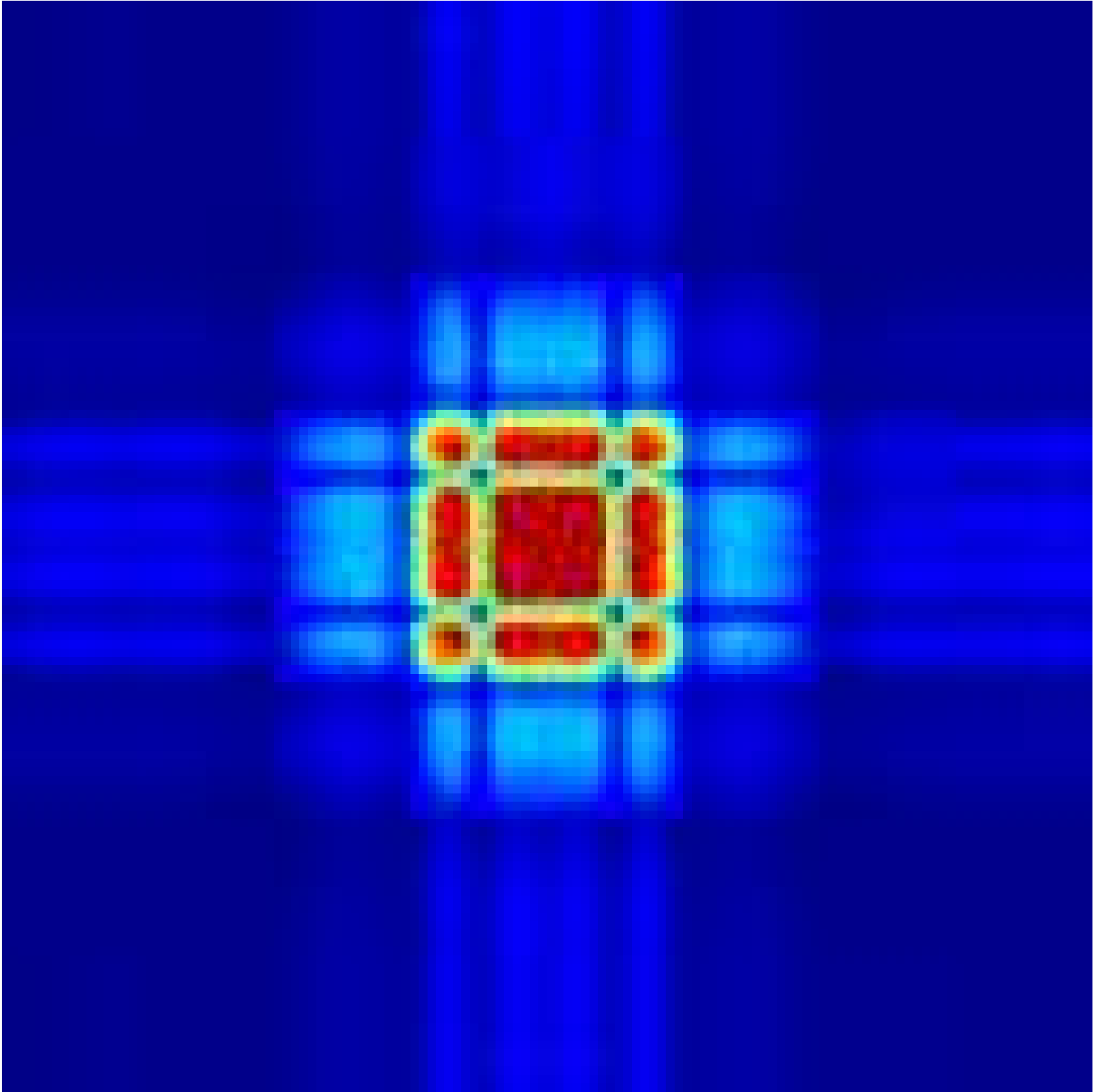}&
\includegraphics[width=.27\linewidth]{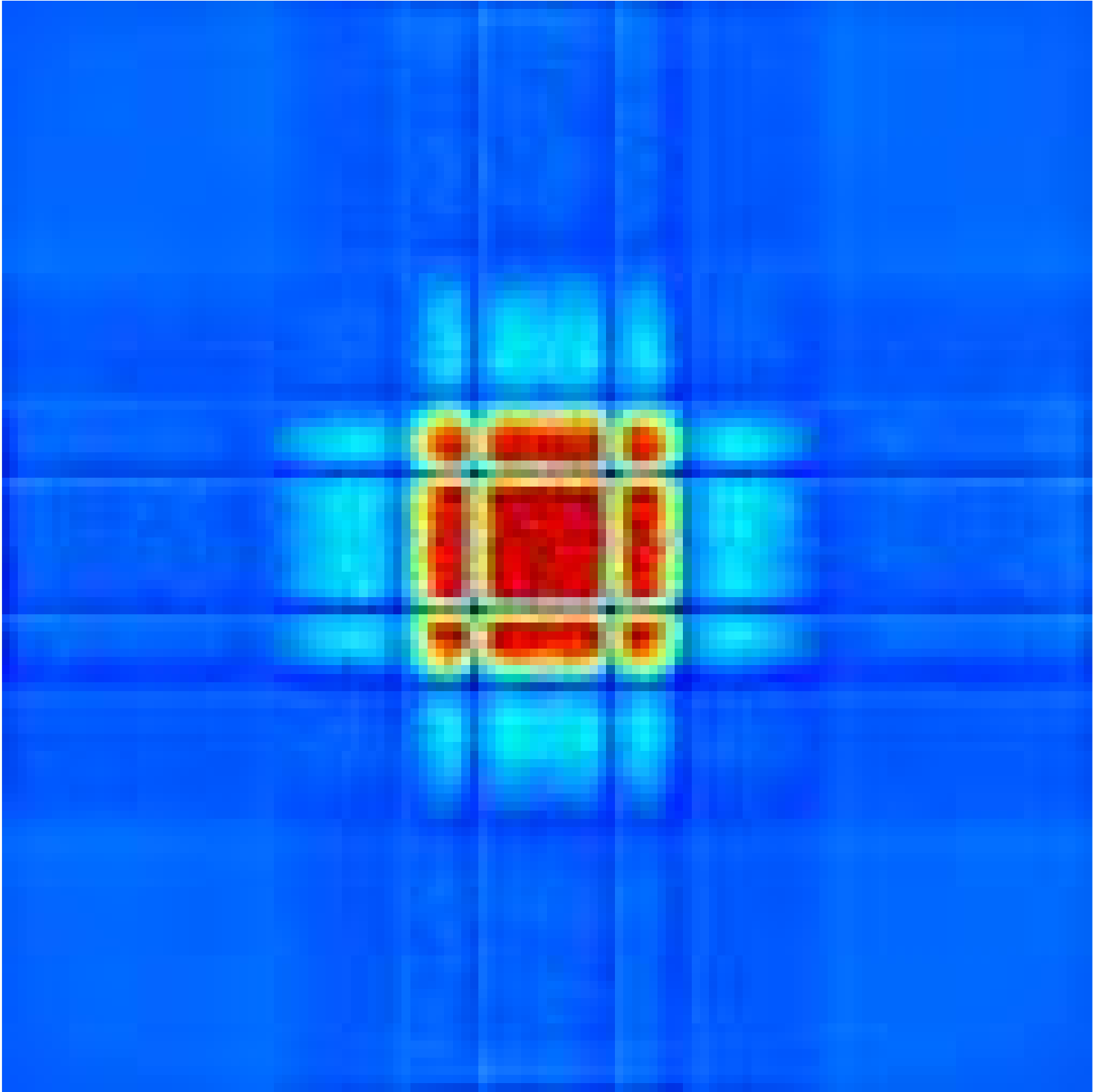}\\
 &{\scriptsize $T_{\texttt{Rep}}$ = 92 ms} & &  {\scriptsize rel. error = 60 \%}\\

\rotatebox{90}{\hspace{0.04\linewidth}\footnotesize 10 \% max. speed}&
\includegraphics[width=.25\linewidth]{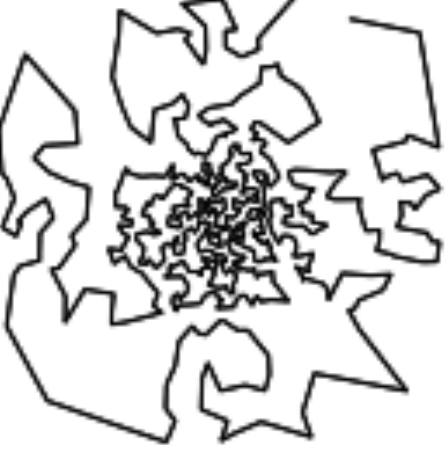}&
\includegraphics[width=.27\linewidth]{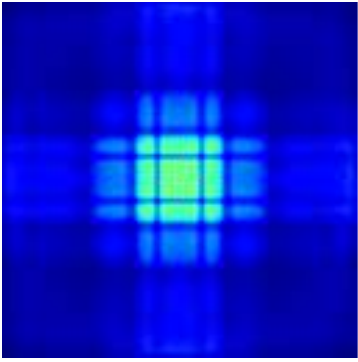}&
\includegraphics[width=.27\linewidth]{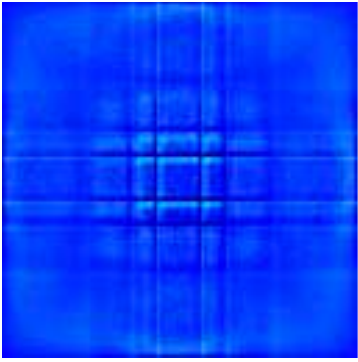}\\
 &{\scriptsize $T$ = 90 ms} & &  {\scriptsize rel. error = 10 \%}\\

\rotatebox{90}{\hspace{0.04\linewidth}\footnotesize 50 \% max. speed}&
\includegraphics[width=.25\linewidth]{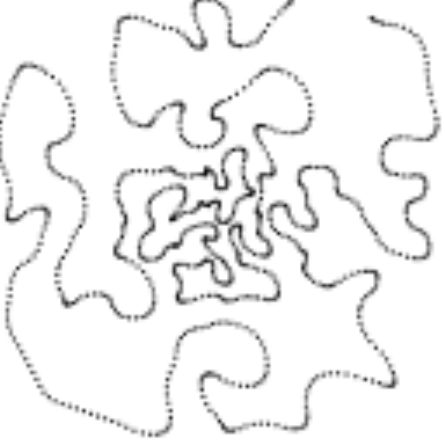}&
\includegraphics[width=.27\linewidth]{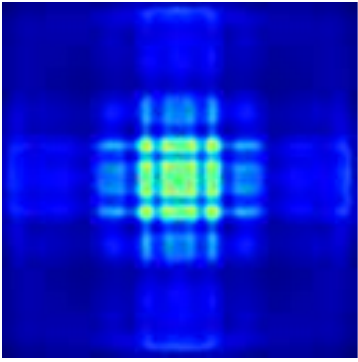}&
\includegraphics[width=.27\linewidth]{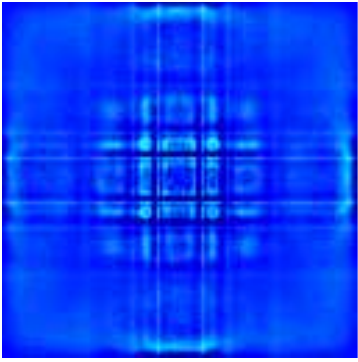}\\
 &{\scriptsize $T$ = 18 ms} & &  {\scriptsize rel. error = 12 \%}\\

\rotatebox{90}{\hspace{0.04\linewidth}\footnotesize max. speed}&
\includegraphics[width=.25\linewidth]{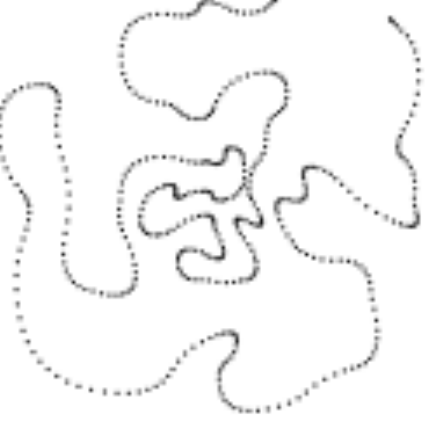}&
\includegraphics[width=.27\linewidth]{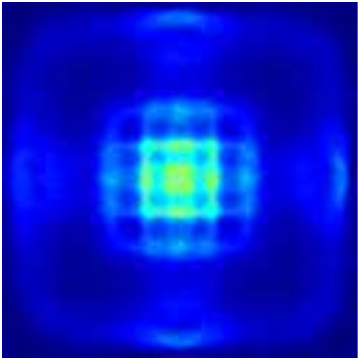}&
\includegraphics[width=.27\linewidth]{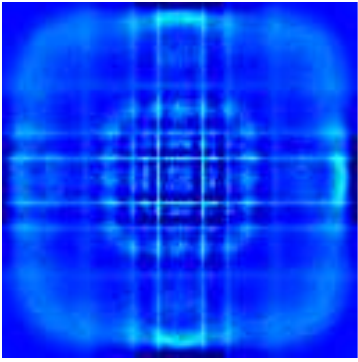}\\
 &{\scriptsize $T$ = 9 ms} & &  {\scriptsize rel. error = 14 \%}\\
\end{tabular}
\end{center}
\caption{\label{fig:MCstudy2} Illustration of TSP trajectories traversed with optimal control~(\textbf{top row}) and with our projection algorithm~(\textbf{rows 2-4}). Columns represent the $k$-space trajectory, the empirical distribution and the difference with the target distribution $\pi$~(Fig.~\ref{fig:MCstudy1}(a)).}
\end{figure}

\section{Finding feasible waveforms using convex optimization}
\label{sec:resol}
Since the set of constraints $\mathcal{S}\cap \mathcal{A}$ is convex, closed and non-empty, Problem \eqref{pb:primal} always admits a unique solution. 
Even though $\mathcal{S}$ has a rather simple structure\footnote{it is just a polytope when the $\ell^\infty$-norm is used.}, it is unlikely that an explicit solution to Problem \eqref{pb:primal} can be found. In what follows, we thus propose a numerical algorithm to find the projection.

\subsubsection*{Problem discretization}
In order to define a numerical algorithm, we first discretize the problem. 
A discrete curve $\bs$ 
is defined as a vector in $\R^{n\cdot d}$ where $n$ is the number of discretization points. Let $\bs(i) \in \R^d$ denote the curve location at time $(i-1) \delta t$ with $\delta t= \frac{T}{n-1}$. 
The discrete derivative $\dot \bs\in \R^{n\cdot d}$ is defined using first order differences:
\begin{align*}
 \dot \bs(i) = \left\{
    \begin{array}{ll}
        0 & \mbox{if } i=1, \\
        (\bs(i)-\bs(i-1))/\delta t \quad & \mbox{if }i \in \{2, \hdots ,n\}.
    \end{array}
\right.
\end{align*}
In the discrete setting, the first-order differential operator can be represented by a matrix $\D\in \R^{nd \times nd}$, i.e. $\dot \bs = \D \bs$. We define the discrete second-order differential operator by $\DD=-\D^* \D \in \R^{nd\times nd}$.

\subsubsection*{An efficient projection algorithm}
The discrete problem we consider is the same as problem \eqref{pb:primal} except that all objects are discretized. The set $\mathcal{S}$ is now $\bS \!:=\!\{\bs \in \R^{n\cdot d}, \|\D \bs\|\leqslant \alpha, \|\DD \bs\|\leqslant \beta\}$ with all norms discretized. Similarly, the discretized version of $\mathcal{A}$ is denoted $\bcA$.
The main idea of our algorithm is to take advantage of the structure of the dual problem to design an efficient projection algorithm. 
The following proposition specifies the dual problem and the primal-dual relationships.
\begin{proposition}
\label{prop:dual}
Let $\displaystyle \|\bq'\|_*:=\sup_{\|\bq\|\leq 1} \langle \bq,\bq'\rangle$ denote the dual norm of $\|\cdot\|$. 
The following equality holds
\begin{align}
& \min_{\bs\in \bS\cap \bcA} \frac{1}{2} \|\bs-\bc\|_2^2 \label{eq:PP}\\
& = \sup_{\bq_1,\bq_2\in \R^{n\cdot d}} F(\bq_1,\bq_2) -\alpha \| \bq_1\|_* -\beta \| \bq_2\|_* \label{eq:PD},
\end{align}
where 
\begin{equation}\label{eq:defF}
F(\bq_1,\bq_2) = \min_{\bs \in \bcA} \langle \D \bs, q_1\rangle + \langle \DD \bs, q_2\rangle + \frac{1}{2}\|\bs-\bc\|_2^2.
\end{equation}

Moreover, let $(\bq_1^*,\bq_2^*)$ denote any minimizer of the dual problem \eqref{eq:PD}, $\bs^*$ denote the unique solution of the primal problem \eqref{eq:PP} and $\bs^*(\bq_1^*,\bq_2^*)$ denote the solution of the minimization problem \eqref{eq:defF}. Then $\displaystyle \bs^*= \bs^*(\bq_1^*,\bq_2^*)$.
\end{proposition}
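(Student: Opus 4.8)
The statement is a standard convex-duality result, so the plan is to derive the dual by introducing the constraints $\|\D\bs\|\leqslant\alpha$ and $\|\DD\bs\|\leqslant\beta$ via their support functions and applying a minimax exchange. First I would rewrite the primal problem \eqref{eq:PP} by noting that, for any convex set characterised through a norm ball, $\imath_{\{\|\bv\|\leqslant\alpha\}}(\bv)=\sup_{\bq_1}\langle\bv,\bq_1\rangle-\alpha\|\bq_1\|_*$, where $\imath$ denotes the $\{0,+\infty\}$ indicator. Substituting $\bv=\D\bs$ and likewise $\bq_2$ for the second-order term, the primal objective becomes
\begin{align*}
\min_{\bs\in\bcA}\ \sup_{\bq_1,\bq_2}\ \langle\D\bs,\bq_1\rangle+\langle\DD\bs,\bq_2\rangle+\tfrac12\|\bs-\bc\|_2^2-\alpha\|\bq_1\|_*-\beta\|\bq_2\|_*.
\end{align*}
The right-hand side of \eqref{eq:PD} is exactly what one gets after swapping $\min_{\bs}$ and $\sup_{\bq_1,\bq_2}$ and recognising the inner minimisation as $F(\bq_1,\bq_2)$ from \eqref{eq:defF}.

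\textbf{Key steps, in order.} (1) Justify the minimax exchange: the objective is convex in $\bs$ (a positive-definite quadratic plus linear terms, restricted to the affine set $\bcA$) and concave — indeed affine plus a concave penalty — in $(\bq_1,\bq_2)$; the constraint set $\bcA$ is a nonempty closed affine subspace and the quadratic term is coercive on it, so Sion's minimax theorem (or Fenchel--Rockafellar duality, using that $\bS\cap\bcA\neq\emptyset$ guarantees a Slater-type qualification for the norm constraints, which have full domain) applies and gives equality of the two values with no duality gap. (2) Identify $F$: the inner problem $\min_{\bs\in\bcA}\langle\D\bs,\bq_1\rangle+\langle\DD\bs,\bq_2\rangle+\tfrac12\|\bs-\bc\|_2^2$ is a strictly convex quadratic over an affine set, hence has a unique minimiser, which I call $\bs^*(\bq_1,\bq_2)$; this is precisely \eqref{eq:defF}. (3) Recover the primal solution: let $(\bq_1^*,\bq_2^*)$ attain the dual supremum. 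Since there is no duality gap, the chain of inequalities
\begin{align*}
\tfrac12\|\bs^*-\bc\|_2^2\ \geqslant\ \langle\D\bs^*,\bq_1^*\rangle+\langle\DD\bs^*,\bq_2^*\rangle+\tfrac12\|\bs^*-\bc\|_2^2-\alpha\|\bq_1^*\|_*-\beta\|\bq_2^*\|_*\ \geqslant\ F(\bq_1^*,\bq_2^*)
\end{align*}
must be tight throughout (the first inequality uses $\|\D\bs^*\|\leqslant\alpha$, $\|\DD\bs^*\|\leqslant\beta$ and the definition of the dual norm; the second uses that $\bs^*\in\bcA$ so it is a candidate in \eqref{eq:defF}). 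Tightness of the second inequality means $\bs^*$ itself minimises \eqref{eq:defF}; by the uniqueness noted in step (2), $\bs^*=\bs^*(\bq_1^*,\bq_2^*)$.

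\textbf{Main obstacle.} The only delicate point is the rigorous justification of the minimax swap and the absence of a duality gap: one must check the constraint qualification, namely that the "difficult" constraints $\|\D\bs\|\leqslant\alpha$, $\|\DD\bs\|\leqslant\beta$ can be dualised while keeping the affine constraints $\bcA$ hard. Because $\bS\cap\bcA$ is assumed nonempty and the seminorms $\|\D\cdot\|$, $\|\DD\cdot\|$ are everywhere finite, a relative-interior / Slater condition holds and Fenchel--Rockafellar (or Sion, after noting compactness can be arranged by restricting $(\bq_1,\bq_2)$ to a large ball without changing the sup) gives strong duality; the remaining computations — expressing the indicator of a norm ball through its support function, and the elementary inequality chain in step (3) — are routine. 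I would also briefly remark that dual minimisers exist because the dual objective is upper semicontinuous, concave and coercive in $(\bq_1,\bq_2)$ modulo the null space of $(\D^*,\DD^*)$, which is handled by the affine-constraint structure; this secures that the statement "let $(\bq_1^*,\bq_2^*)$ denote any minimizer" is not vacuous.
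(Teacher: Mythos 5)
Your proposal is correct and follows essentially the same route as the paper: dualize the norm-ball indicators via their support-function (dual-norm) representation, swap $\min$ and $\sup$ (the paper simply cites Rockafellar's Theorem 31.3 where you invoke Sion/Fenchel--Rockafellar with a constraint qualification), and read off the primal solution from the dual optimum — your explicit sandwich argument for that last step is a detail the paper only asserts. One small slip: the last member of your displayed chain should be $F(\bq_1^*,\bq_2^*)-\alpha\|\bq_1^*\|_*-\beta\|\bq_2^*\|_*$ (the dual objective value), since otherwise the chain does not close under strong duality; your accompanying justification makes clear this is what you intended.
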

The following proposition gives an explicit expression of $\bs^*(\bq_1^*,\bq_2^*)$.
\begin{proposition}\label{prop:defsstar}
The minimizer
\begin{equation*}
 \bs^*(\bq_1^*,\bq_2^*) = \argmin_{\bs \in \bcA} \langle \D \bs, \bq_1\rangle + \langle \DD \bs, \bq_2\rangle + \frac{1}{2}\|\bs-\bc\|_2^2
\end{equation*}
is given by 
\begin{equation}\label{eq:defsstar}
\bs^*(\bq_1,\bq_2) = \bz  + \bA^+(\bv - \bA\bz),
\end{equation}
where $\bA^+=\bA^*(\bA \bA^*)^{-1}$ denotes the pseudo-inverse of $\bA$ and $\bz = \bc - \D^* \bq_1 - \DD^*\bq_2$.
\end{proposition}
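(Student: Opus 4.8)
The plan is to reduce the constrained quadratic minimization to an orthogonal projection onto the affine subspace $\bcA$, for which a closed form is classical. First I would use the adjoint relations $\langle \D\bs,\bq_1\rangle=\langle \bs,\D^*\bq_1\rangle$ and $\langle \DD\bs,\bq_2\rangle=\langle \bs,\DD^*\bq_2\rangle$ to write the objective as $\frac12\|\bs-\bc\|_2^2+\langle \bs,\bw\rangle$ with $\bw:=\D^*\bq_1+\DD^*\bq_2$. Completing the square, $\frac12\|\bs-\bc\|_2^2+\langle \bs,\bw\rangle=\frac12\|\bs-(\bc-\bw)\|_2^2+\bigl(\tfrac12\|\bc\|_2^2-\tfrac12\|\bc-\bw\|_2^2\bigr)$, where the parenthesized term does not depend on $\bs$. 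Hence, setting $\bz:=\bc-\bw=\bc-\D^*\bq_1-\DD^*\bq_2$, the minimization problem is equivalent to $\min_{\bs\in\bcA}\tfrac12\|\bs-\bz\|_2^2$, i.e. to computing the Euclidean projection of $\bz$ onto $\bcA=\{\bs:\bA\bs=\bv\}$ (which is nonempty and closed, since $\mathcal S\cap\mathcal A\neq\emptyset$ carries over to the discretization).

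Second, I would establish the projection formula. Because the affine constraints are assumed independent, $\bA$ has full row rank, so $\bA\bA^*$ is invertible and $\bA^+=\bA^*(\bA\bA^*)^{-1}$ is well defined. Put $\bs^*:=\bz+\bA^+(\bv-\bA\bz)$. Feasibility is checked directly: $\bA\bs^*=\bA\bz+\bA\bA^*(\bA\bA^*)^{-1}(\bv-\bA\bz)=\bv$, so $\bs^*\in\bcA$. For optimality, observe that $\bs^*-\bz=\bA^*(\bA\bA^*)^{-1}(\bv-\bA\bz)\in\operatorname{range}(\bA^*)=(\ker\bA)^\perp$, while any competitor $\bs\in\bcA$ satisfies $\bs-\bs^*\in\ker\bA$; therefore $\langle \bs^*-\bz,\bs-\bs^*\rangle=0$, and the Pythagorean identity $\|\bs-\bz\|_2^2=\|\bs-\bs^*\|_2^2+\|\bs^*-\bz\|_2^2$ shows $\bs^*$ is the unique minimizer. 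Combining the two steps gives $\bs^*(\bq_1,\bq_2)=\bz+\bA^+(\bv-\bA\bz)$, as claimed.

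There is essentially no deep obstacle here: the argument is a completion of the square followed by the standard formula for projection onto an affine subspace. The only points that deserve a line of justification are the invocation of the full-row-rank (independence) assumption on $\bA$ to guarantee that $\bA\bA^*$ is invertible, and being explicit that the terms discarded in the completion of the square are genuinely independent of $\bs$. Everything else is routine linear algebra, so I would keep the write-up short.
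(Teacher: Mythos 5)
Your proposal is correct and follows essentially the same route as the paper: reduce the objective by completing the square to the Euclidean projection of $\bz=\bc-\D^*\bq_1-\DD^*\bq_2$ onto the affine set $\bcA$, then apply the standard affine-projection formula using the full-row-rank (independent constraints) assumption so that $\bA\bA^*$ is invertible. The only cosmetic difference is that the paper derives the formula from the orthogonality condition $\bz-\bs^*\in\mathrm{im}(\bA^*)$ and solves for the multiplier, whereas you verify the stated formula via feasibility and the Pythagorean identity; the two arguments are interchangeable.
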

Let us now analyze the smoothness properties of $F$.
\begin{proposition}\label{prop:regularityF}
Function $F(\bq_1,\bq_2)$ is concave differentiable with gradient given by
\begin{equation}\label{eq:defgrad}
\nabla F(\bq_1,\bq_2) = - \begin{pmatrix} \D \bs^*(\bq_1,\bq_2) \\ \DD \bs^*(\bq_1,\bq_2) \end{pmatrix}.
\end{equation}

Moreover, the gradient mapping $\nabla F$ is Lipschitz continuous with constant  $L=||| \D^*\D\! +\! \DD^*\DD |||$, where $|||\bM|||$ denotes the spectral norm of $\bM$.
\end{proposition}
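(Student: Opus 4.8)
The plan is to establish the two assertions of Proposition~\ref{prop:regularityF} in turn: first concavity and the gradient formula, then the Lipschitz estimate on $\nabla F$. For the first part, I would start from the definition \eqref{eq:defF}, namely $F(\bq_1,\bq_2) = \min_{\bs\in\bcA} \langle \D\bs,\bq_1\rangle + \langle\DD\bs,\bq_2\rangle + \frac12\|\bs-\bc\|_2^2$. The integrand is, for each fixed $\bs$, an affine (hence concave) function of $(\bq_1,\bq_2)$, and the pointwise infimum of a family of concave functions is concave; this gives concavity immediately. For differentiability, the key observation is that the inner minimization over the affine subspace $\bcA$ has a \emph{unique} minimizer $\bs^*(\bq_1,\bq_2)$, given explicitly by Proposition~\ref{prop:defsstar}, because the objective is strictly convex in $\bs$ (the quadratic term $\frac12\|\bs-\bc\|_2^2$ is coercive and strictly convex, and it is minimized over a nonempty closed affine set). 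Uniqueness of the minimizer is exactly the hypothesis of Danskin's theorem, so $F$ is differentiable and its gradient is obtained by differentiating the integrand at the optimal $\bs^*$: $\partial_{\bq_1}[\langle\D\bs,\bq_1\rangle + \cdots] = \D\bs$ and $\partial_{\bq_2}[\cdots] = \DD\bs$, evaluated at $\bs = \bs^*(\bq_1,\bq_2)$, yielding \eqref{eq:defgrad}. (Alternatively, one can avoid invoking Danskin by a direct envelope computation using the explicit formula for $\bs^*$ and checking the chain rule by hand, but Danskin is cleaner.)

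For the Lipschitz bound, I would plug the explicit expression from Proposition~\ref{prop:defsstar} into \eqref{eq:defgrad} and track the dependence on $(\bq_1,\bq_2)$. Write $\bz = \bc - \D^*\bq_1 - \DD^*\bq_2$ and $\bs^*(\bq_1,\bq_2) = \bz + \bA^+(\bv - \bA\bz) = \Pi\bz + \bA^+\bv$, where $\Pi = \Ib - \bA^+\bA$ is the orthogonal projector onto the kernel of $\bA$. Hence $\bs^*$ is an affine function of $(\bq_1,\bq_2)$ with linear part $-\Pi(\D^*\bq_1 + \DD^*\bq_2)$, and consequently $\nabla F$ is affine in $(\bq_1,\bq_2)$ with linear part
\begin{equation*}
(\bq_1,\bq_2) \mapsto \begin{pmatrix} \D \\ \DD \end{pmatrix} \Pi \begin{pmatrix} \D^* & \DD^* \end{pmatrix} \begin{pmatrix} \bq_1 \\ \bq_2 \end{pmatrix}.
\end{equation*}
The Lipschitz constant of $\nabla F$ is therefore the spectral norm of the matrix $\bB := \begin{pmatrix}\D\\ \DD\end{pmatrix}\Pi\begin{pmatrix}\D^* & \DD^*\end{pmatrix}$. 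Since $\Pi$ is an orthogonal projector, $\|\Pi\|\le 1$ and $\bB$ is a compression of $\begin{pmatrix}\D\\ \DD\end{pmatrix}\begin{pmatrix}\D^* & \DD^*\end{pmatrix}$, so $|||\bB||| \le \left|\left|\left| \begin{pmatrix}\D\\ \DD\end{pmatrix}\begin{pmatrix}\D^* & \DD^*\end{pmatrix}\right|\right|\right|$. This last matrix has the same nonzero spectrum as $\begin{pmatrix}\D^* & \DD^*\end{pmatrix}\begin{pmatrix}\D\\ \DD\end{pmatrix} = \D^*\D + \DD^*\DD$, whence $L = |||\D^*\D + \DD^*\DD|||$ is a valid Lipschitz constant, matching the claim.

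The step I expect to be the main obstacle is the passage from ``$\bB$ is a compression by the projector $\Pi$'' to the stated bound involving only $\D^*\D + \DD^*\DD$: one must be careful that the constant is not worsened by the presence of $\Pi$ (it is not, since $\|\Pi\| \le 1$), and one should double-check the swap of the order of the rectangular blocks (using $|||\bM\bM^*||| = |||\bM^*\bM|||$ with $\bM = \begin{pmatrix}\D^* & \DD^*\end{pmatrix}$). A cleaner self-contained route that sidesteps any subtlety: for any $(\bq_1,\bq_2)$ and $(\bq_1',\bq_2')$, bound $\|\nabla F(\bq_1,\bq_2) - \nabla F(\bq_1',\bq_2')\|$ directly by first using $\|\bs^*(\bq_1,\bq_2) - \bs^*(\bq_1',\bq_2')\| \le \|\D^*(\bq_1-\bq_1') + \DD^*(\bq_2-\bq_2')\|$ (since $\|\Pi\|\le 1$), then $\|\D\Delta\bs\|^2 + \|\DD\Delta\bs\|^2 = \langle(\D^*\D + \DD^*\DD)\Delta\bs, \Delta\bs\rangle \le L\|\Delta\bs\|^2$, and finally $\|\D^*\bp_1 + \DD^*\bp_2\|^2 = \langle(\D^*\D+\DD^*\DD)^{1/2}\cdots\rangle$-type Cauchy--Schwarz to relate $\|\D^*(\bq_1-\bq_1') + \DD^*(\bq_2-\bq_2')\|^2$ back to $L(\|\bq_1-\bq_1'\|^2 + \|\bq_2-\bq_2'\|^2)$. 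Assembling these three inequalities gives $\|\nabla F(\bq_1,\bq_2) - \nabla F(\bq_1',\bq_2')\|^2 \le L^2(\|\bq_1-\bq_1'\|^2 + \|\bq_2-\bq_2'\|^2)$, i.e. Lipschitz continuity with constant $L$. I would present this elementary chain rather than the spectral-identity argument, as it is the least error-prone.
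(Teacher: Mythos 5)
Your proposal is correct, and it is genuinely more self-contained than the paper's treatment: the paper disposes of this proposition in one sentence in Appendix~\ref{sec:proofp2} by citing standard convex-analysis results (smoothness of min-type functions whose inner objective is strongly convex), which give differentiability, the envelope formula for the gradient, and the Lipschitz constant $|||\bA|||^2/\sigma$ with $\bA$ the vertical concatenation of $\D$ and $\DD$ and $\sigma=1$, i.e.\ exactly $L=|||\D^*\D+\DD^*\DD|||$. You instead exploit the affine structure of $\bcA$ via Proposition~\ref{prop:defsstar}: writing $\bs^*(\bq_1,\bq_2)=\Pi\bz+\bA^+\bv$ with $\Pi=\Ib-\bA^+\bA$ makes $\nabla F$ an explicit affine map whose linear part is $\bB=\bM\Pi\bM^*$ (with $\bM$ the stacked operator), and $|||\bB|||\leq |||\bM|||\cdot|||\Pi|||\cdot|||\bM^*|||\leq |||\bM^*\bM|||=L$; your elementary three-inequality chain is a correct alternative rendering of the same estimate. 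What your route buys is a fully checkable computation and even a slightly sharper admissible constant $|||\bM\Pi\bM^*|||\leq L$; what the paper's route buys is brevity and independence from the specific affine form of the constraint set (only strong convexity of the inner objective is needed).

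Two caveats. First, the envelope/Danskin differentiation you perform gives $\nabla F(\bq_1,\bq_2)=+\bigl(\D\bs^*(\bq_1,\bq_2),\DD\bs^*(\bq_1,\bq_2)\bigr)$, not the minus sign of \eqref{eq:defgrad}; the displayed sign is that of $\nabla\tilde F=-\nabla F$, which is the quantity Algorithm~\ref{algo:max} actually uses. A quick sanity check with $\bcA=\R^{n\cdot d}$ and $\DD=0$ gives $F(\bq_1)=\langle \D\bc,\bq_1\rangle-\tfrac12\|\D^*\bq_1\|_2^2$ and hence $\nabla F(\bq_1)=\D\bs^*(\bq_1)$, confirming the plus sign. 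So the sign slip is in the stated formula rather than in your derivation, but you should not claim your computation ``yields \eqref{eq:defgrad}'' verbatim; note the discrepancy (it does not affect concavity or the Lipschitz constant). Second, textbook Danskin assumes a compact index set, whereas the inner minimization here is over an unbounded affine set; justify differentiability either by your own fallback (the explicit affine formula for $\bs^*$, which makes $F$ an explicit concave quadratic) or by strong convexity of the inner problem, as in the references the paper cites.
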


The proofs are given in Appendix~\ref{sec:proofp1}.
The dual problem \eqref{eq:PD} has a favorable structure for optimization: 
it is the sum of a differentiable convex function $\tilde{F}(\bq_1,\bq_2)=-F(\bq_1,\bq_2)$ and of a simple convex function $G(\bq_1,\bq_2)\!=\alpha \| \bq_1\|_* \!+\!\beta \| \bq_2\|_*$. 
The sum $\tilde F\!+\!G$ can thus be minimized efficiently using accelerated proximal gradient descents~\cite{Nesterov83} (see Algorithm~\ref{algo:max} below).
\begin{algorithm}[!ht]
\KwIn{$\bc \in \R^{n\cdot d}$, $ \alpha, \beta >0$, $n_{it}$.}
\KwOut{$\tilde \bs \in \R^{n \cdot d}$ an approximation of the solution $\bs^*$.}
\textbf{Initialize} $\bq^{(0)}=(\bq_1^{(0)}, \bq_2^{(0)})$ with $\bq_i^{(0)}=0$ for $i=1,2$. Set $\by^{(0)}=\bq^{(0)}$. \\
Set $t=1/L$. \\
\For{$k=1\hdots n_{it}$}{
  $\bq^{(k)}=\prox_{t G}(\by^{(k-1)}-t\nabla \tilde F(\by^{(k-1)}))$\\
  $\by^{(k)}=\bq^{(k)}+\frac{k-1}{k+2}(\bq^{(k)}-\bq^{(k-1)})$\\
}
\Return{$ \displaystyle \tilde \bs=\bs^*\left(\bq_1^{(n_{it})},\bq_2^{(n_{it})}\right)$}\;
\caption{Projection algorithm in the dual space}
\label{algo:max}
\end{algorithm}

Moreover, by combining the convergence rate results of~\cite{Nesterov83,beck2009gradient} and some convex analysis (see Appendix \ref{sec:prthm}), we obtain the following convergence rate:
\begin{theorem}\label{thm:cvrate}
Algorithm~\ref{algo:max} ensures that the distance to the minimizer decreases as $ \mathcal{O}\left( \frac{1}{k^2}\right)$:
\begin{equation}
 \| \bs^{(k)}-\bs^* \|_2^2 \leq \frac{2L \|\bq^{(0)}- \bq^*\|_2^2}{k^2}.
\end{equation}
\end{theorem}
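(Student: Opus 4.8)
The plan is to combine the $\mathcal{O}(1/k^2)$ convergence rate of the accelerated proximal gradient method applied to the \emph{dual} objective with a strong-convexity argument that turns \emph{dual objective} suboptimality into \emph{primal iterate} error. Write $h:=\tilde F+G$ for the objective minimized by Algorithm~\ref{algo:max}, where $\tilde F=-F$ and $G(\bq_1,\bq_2)=\alpha\|\bq_1\|_*+\beta\|\bq_2\|_*$; let $\bq^*=(\bq_1^*,\bq_2^*)$ be a dual minimizer and $\bs^{(k)}:=\bs^*(\bq_1^{(k)},\bq_2^{(k)})$. By Proposition~\ref{prop:regularityF}, $\nabla\tilde F$ is $L$-Lipschitz with $L=|||\D^*\D+\DD^*\DD|||$, while $G$ is convex with an inexpensive proximal operator, so Algorithm~\ref{algo:max} is an instance of the accelerated proximal gradient method of~\cite{Nesterov83,beck2009gradient} run with step $t=1/L$. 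The standard analysis then gives, for the prox-iterates $\bq^{(k)}$,
\begin{equation*}
0\;\le\;h(\bq^{(k)})-h(\bq^*)\;\le\;\frac{2L\,\|\bq^{(0)}-\bq^*\|_2^2}{(k+1)^2}.
\end{equation*}
Since $h=-D$ with $D(\bq_1,\bq_2)=F(\bq_1,\bq_2)-\alpha\|\bq_1\|_*-\beta\|\bq_2\|_*$ the dual objective \eqref{eq:PD}, and since strong duality (Proposition~\ref{prop:dual}) gives $-h(\bq^*)=\min_{\bs\in\bS\cap\bcA}\tfrac12\|\bs-\bc\|_2^2=:P^*$, the middle quantity equals the duality gap at $\bq^{(k)}$.

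The crux is to bound $\|\bs^{(k)}-\bs^*\|_2^2$ by this duality gap. Fix a dual point $\bq=(\bq_1,\bq_2)$ and let $\ell_{\bq}(\bs):=\langle\D\bs,\bq_1\rangle+\langle\DD\bs,\bq_2\rangle+\tfrac12\|\bs-\bc\|_2^2$ denote the function minimized over $\bcA$ in \eqref{eq:defF}: it is $1$-strongly convex, and over the affine set $\bcA$ its minimum $F(\bq)$ is attained at $\bs^*(\bq_1,\bq_2)$. Because $\bs^*\in\bS\cap\bcA\subseteq\bcA$ and the displacement $\bs^*-\bs^*(\bq_1,\bq_2)$ is tangent to $\bcA$, strong convexity evaluated at $\bs^*$ yields $\tfrac12\|\bs^*-\bs^*(\bq_1,\bq_2)\|_2^2\le\ell_{\bq}(\bs^*)-F(\bq)$. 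Moreover, $\bs^*\in\bS$ means $\|\D\bs^*\|\le\alpha$ and $\|\DD\bs^*\|\le\beta$, so by definition of the dual norm $\langle\D\bs^*,\bq_1\rangle\le\alpha\|\bq_1\|_*$ and $\langle\DD\bs^*,\bq_2\rangle\le\beta\|\bq_2\|_*$, whence $\ell_{\bq}(\bs^*)\le\tfrac12\|\bs^*-\bc\|_2^2+G(\bq)=P^*+G(\bq)$. Combining these and using $D=F-G$ together with strong duality once more, $\tfrac12\|\bs^*-\bs^*(\bq_1,\bq_2)\|_2^2\le P^*+G(\bq)-F(\bq)=P^*-D(\bq)=h(\bq)-h(\bq^*)$, valid for every dual point $\bq$.

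Specializing to $\bq=\bq^{(k)}$ (so that $\bs^*(\bq_1,\bq_2)=\bs^{(k)}$) and chaining with the dual rate gives
\begin{equation*}
\|\bs^{(k)}-\bs^*\|_2^2\;\le\;2\bigl(h(\bq^{(k)})-h(\bq^*)\bigr)\;\le\;\frac{4L\,\|\bq^{(0)}-\bq^*\|_2^2}{(k+1)^2}\;=\;\mathcal{O}\!\left(\tfrac1{k^2}\right),
\end{equation*}
which is the $\mathcal{O}(1/k^2)$ rate of Theorem~\ref{thm:cvrate}; the exact constant in the statement is recovered by a careful accounting of the accelerated-gradient constants. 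I expect the main obstacle to be the middle paragraph: the returned iterate $\bs^{(k)}$ only satisfies the affine constraints $\bcA$, not the kinematic constraints $\bS$, so strong convexity of the primal objective over $\bS\cap\bcA$ cannot be invoked directly; one must instead exploit strong convexity of the \emph{inner} problem \eqref{eq:defF} and couple it with the feasibility inequality $\ell_{\bq}(\bs^*)\le P^*+G(\bq)$, and it is this coupling that closes the estimate. The dual-side convergence rate is, by contrast, an off-the-shelf consequence of Proposition~\ref{prop:regularityF}.
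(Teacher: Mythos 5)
Your proof is correct and follows essentially the same route as the paper: an accelerated proximal gradient rate on the dual objective combined with a strong-convexity argument that converts dual suboptimality into the squared primal distance $\|\bs^{(k)}-\bs^*\|_2^2$; the only difference is that you derive the transfer inequality $\tfrac12\|\bs^*(\bq)-\bs^*\|_2^2\le h(\bq)-h(\bq^*)$ from scratch (1-strong convexity of the inner problem over $\bcA$, the dual-norm bound from $\bs^*\in\bS$, and strong duality), whereas the paper invokes it as a ready-made lemma from \cite{Boyer14} with $\sigma=1$ together with the FISTA rate of \cite{beck2009gradient}. Note that, exactly as in the paper's own argument, the chain only yields $4L\|\bq^{(0)}-\bq^*\|_2^2/(k+1)^2$ rather than the stated constant $2L\|\bq^{(0)}-\bq^*\|_2^2/k^2$, so your closing caveat about constants applies to the paper as well and does not constitute a gap in establishing the claimed $\mathcal{O}(1/k^2)$ rate.
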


\section{Illustrations}
\label{sec:illustr}
To compare our results with~\cite{Lustig08}, we used the same gradient constraints. In particular, the maximal gradient norm $G_{\max}$ is set to 40~mT.m$^{-1}$, and the slew-rate  $S_{\max}$ to 150~mT.m$^{-1}$.ms$^{-1}$. $k$-space size is 6~cm$^{-1}$ and sampling rate is $\Delta t= 4$ $\mu$s. For the ease of trajectory representation, we limit ourselves to 2D sampling curves, but the algorithm encompasses the 3D setting.
The Matlab codes containing the projection algorithm as well as the scripts to reproduce the results depicted hereafter are available at http://chauffertn.free.fr/codes.html.
First, we illustrate the output of Algorithm~\ref{algo:max} for the case of rosette trajectories introduced in~\cite{Noll97}, showing a similar behaviour of our algorithm compared to optimal control. Then, we show an application to gradient waveform design to traverse TSP-based sampling trajectories. In this case, only our algorithm yields a fast $k$-space coverage.

\subsection{Smooth trajectories}
The first illustration in Fig.~\ref{fig:rosette} shows that when the sampling curve is smooth enough, our algorithm has a similar behaviour than optimal control. Here, we considered the case of Rosette trajectory defined in~\cite{Noll97,Lustig08} as $k(t)=k_{\max} \sin (\omega_1 t) [\cos(\omega_2 t) ; \sin(\omega_2 t)]$ (with $\omega_1=1.419$ and $\omega_2=0.8233$ rad.s$^{-1}$). In Fig.~\ref{fig:rosette}, we considered $c(t)$ the parameterization of $k(t)$ at constant speed (90\% of $\gamma G_{\max}$). The black curve represents the projection of $c$ onto the set of constraints for Rotation Variant~(RV) constraints~(Fig.~\ref{fig:rosette}(a)) and for Rotation Invariant~(RIV) constraints~(Fig.~\ref{fig:rosette}(b)). Then, we show the corresponding gradient waveforms to traverse the trajectory~(see Fig.~\ref{fig:rosette}(c,d) for optimal control solution and Fig.~\ref{fig:rosette}(e,f) for the output of our projection algorithm).

We observed that the time spent to traverse the curve is slighty shorter compared to the exact reparameterization, while the support of the curve is reduced. Of course, the initial parameterization of $c$ has a strong impact, and there is a trade-off between the time spent to traverse the curve and the distance to the original support. We also noticed that RV constraints are less restrictive, enabling a faster traversal time in the optimal control framework~(Fig.~\ref{fig:rosette}(c)) or a lower support distortion in the proposed setting~(Fig.~\ref{fig:rosette}(a)), as also noticed in~\cite{Vaziri13}. 

In this example, there is no real difference between the optimal control and the projection approaches. The following example, based on piece-wise linear sampling curves, shows in contrast that optimal control yields really longer traversal time.

\begin{figure}[!ht]
\begin{center}
\begin{tabular}{cc}
RV constraints & RIV constraints \\
(a)&(b) \\
\includegraphics[width=.4\linewidth]{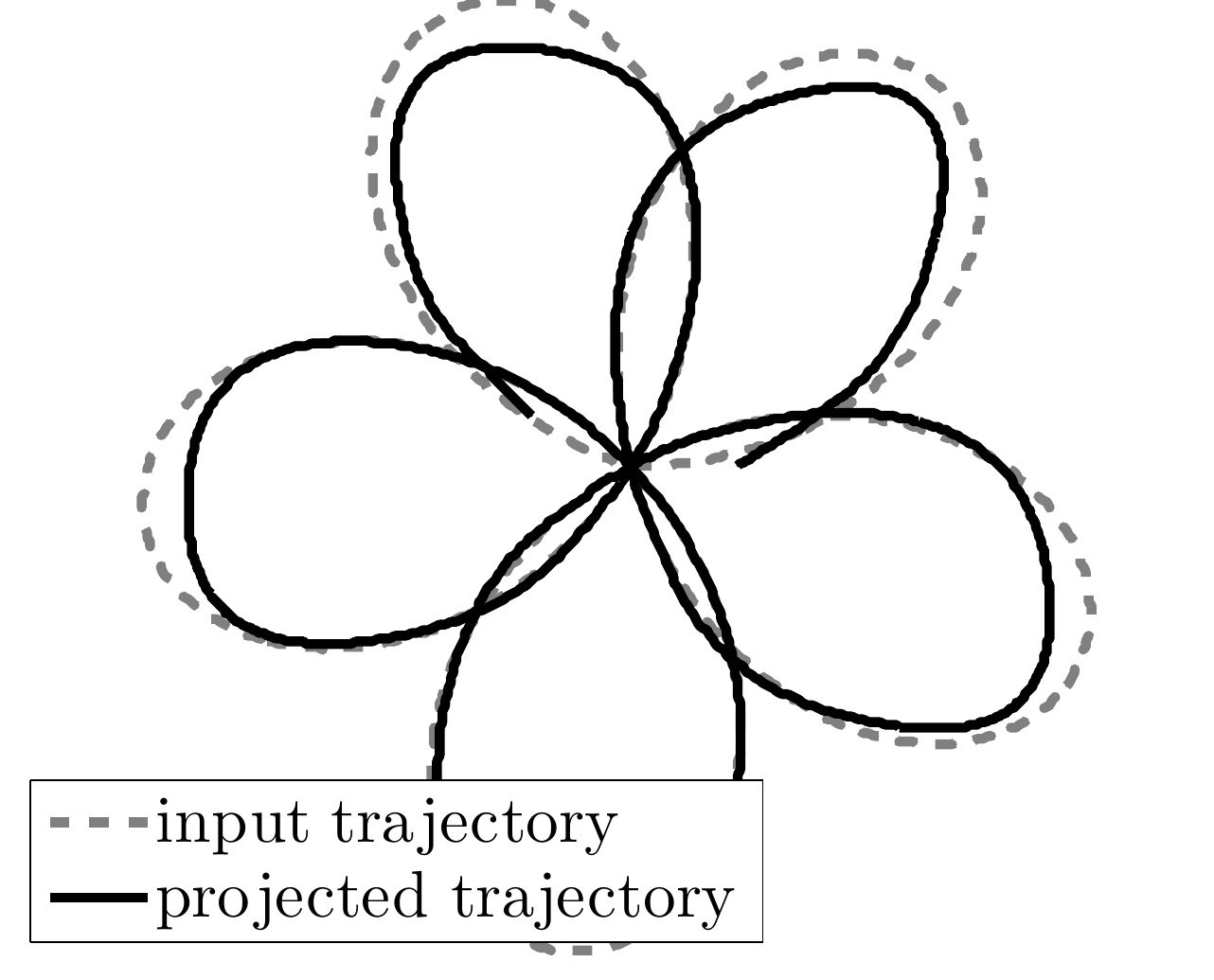}&
\includegraphics[width=.4\linewidth]{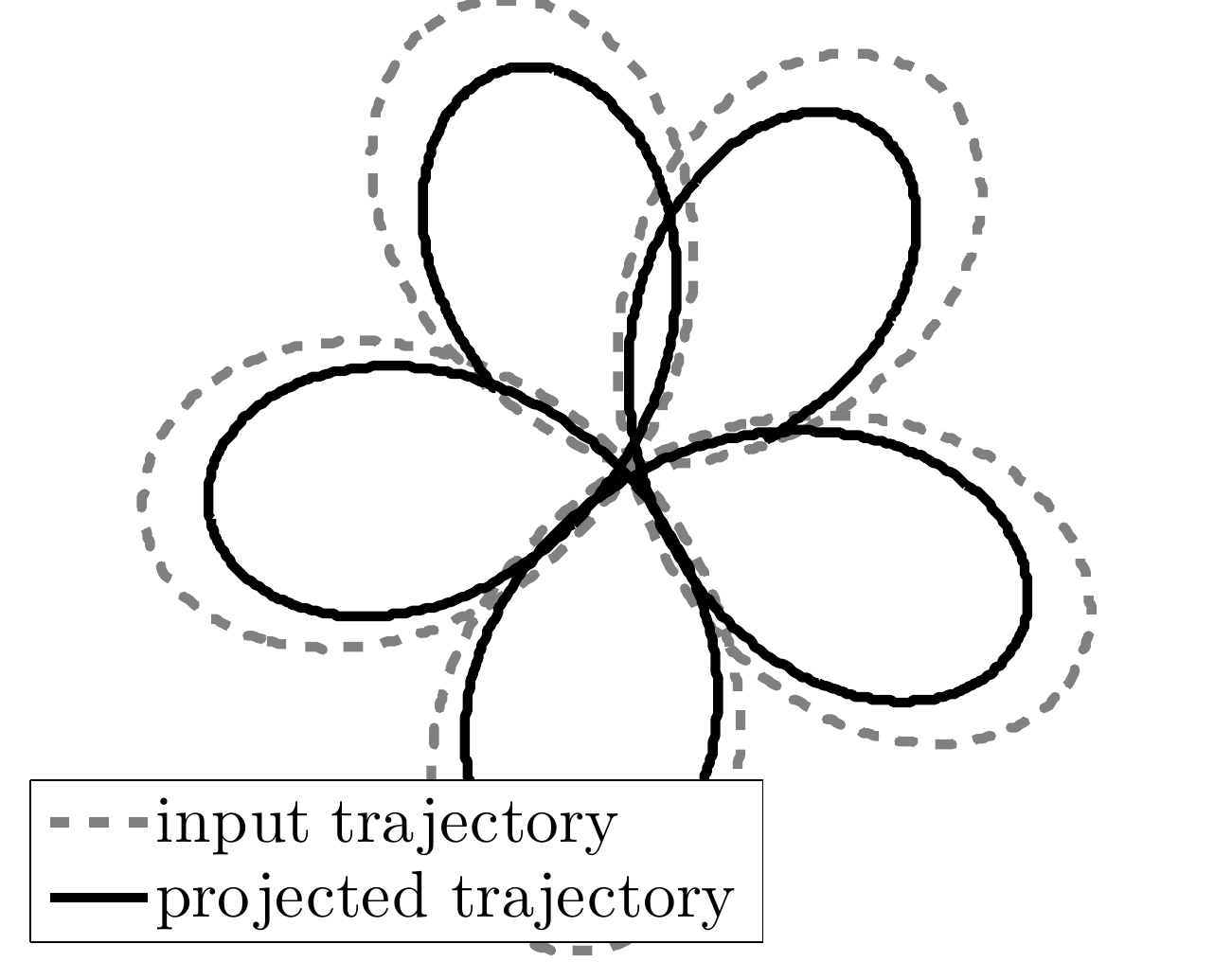}\\
\end{tabular}
\end{center}
\begin{center}

\begin{tabular}{cccc}
 &(c)&(d) \\[-.01 \linewidth]
\rotatebox{90}{\hspace{-0.01\linewidth} \footnotesize  reparameterization}
\hspace{.02\linewidth}
\rotatebox{90}{\hspace{0.08\linewidth} $g(t)$} &
\hspace{-.052\linewidth}
\includegraphics[width=.35\linewidth]{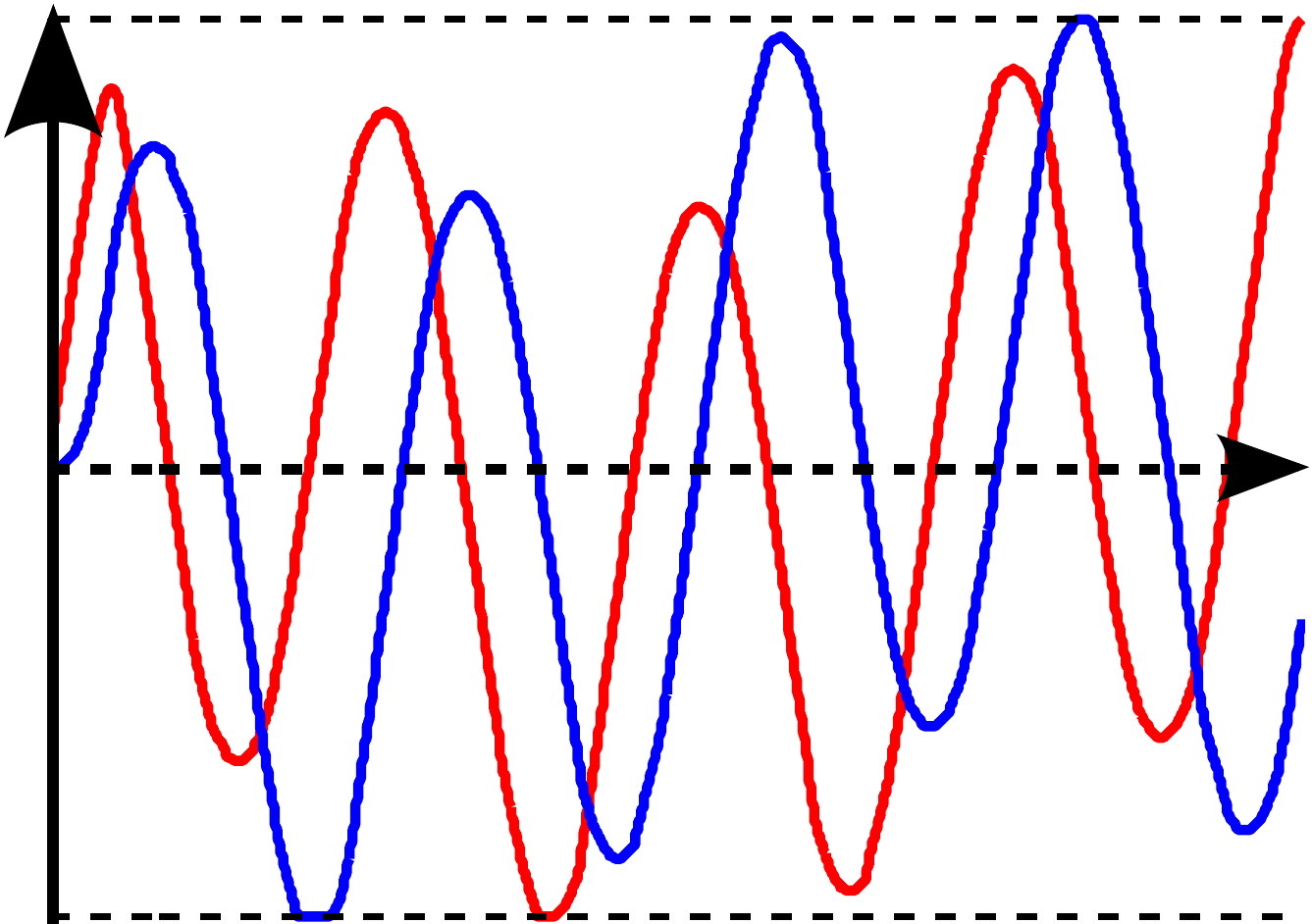}&
\includegraphics[width=.35\linewidth]{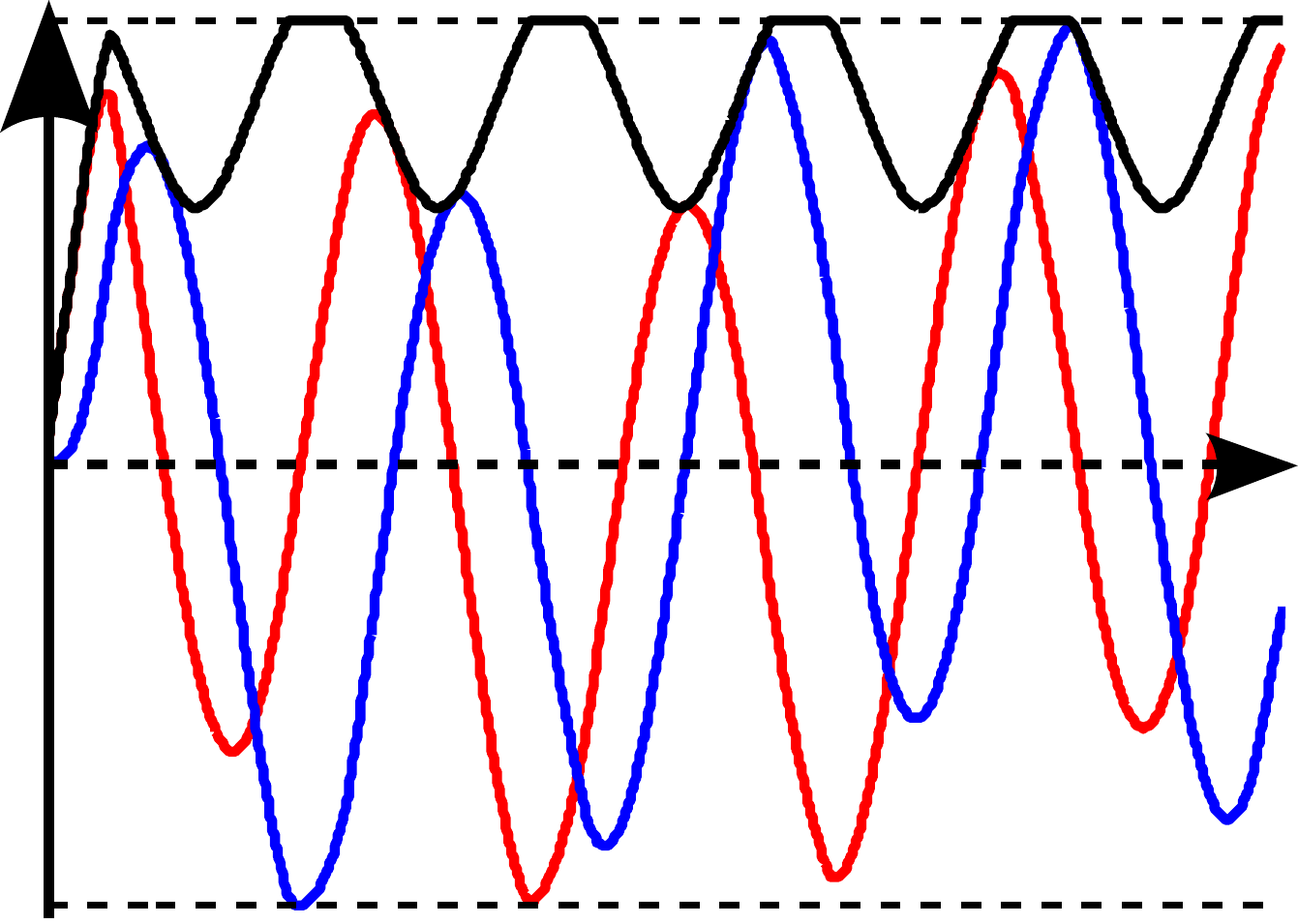}\\
 & {\scriptsize ($T_{\texttt{Rep}}$ = 5.45 ms)} & {\scriptsize ($T_{\texttt{Rep}}$ = 5.58 ms)} \\
 &(e)&(f) \\ [-.01 \linewidth]
\rotatebox{90}{\hspace{0.04\linewidth}\footnotesize   projection}
\hspace{.02\linewidth}
\rotatebox{90}{\hspace{0.08\linewidth} $g(t)$} &
\hspace{-.052\linewidth}
\includegraphics[width=.35\linewidth]{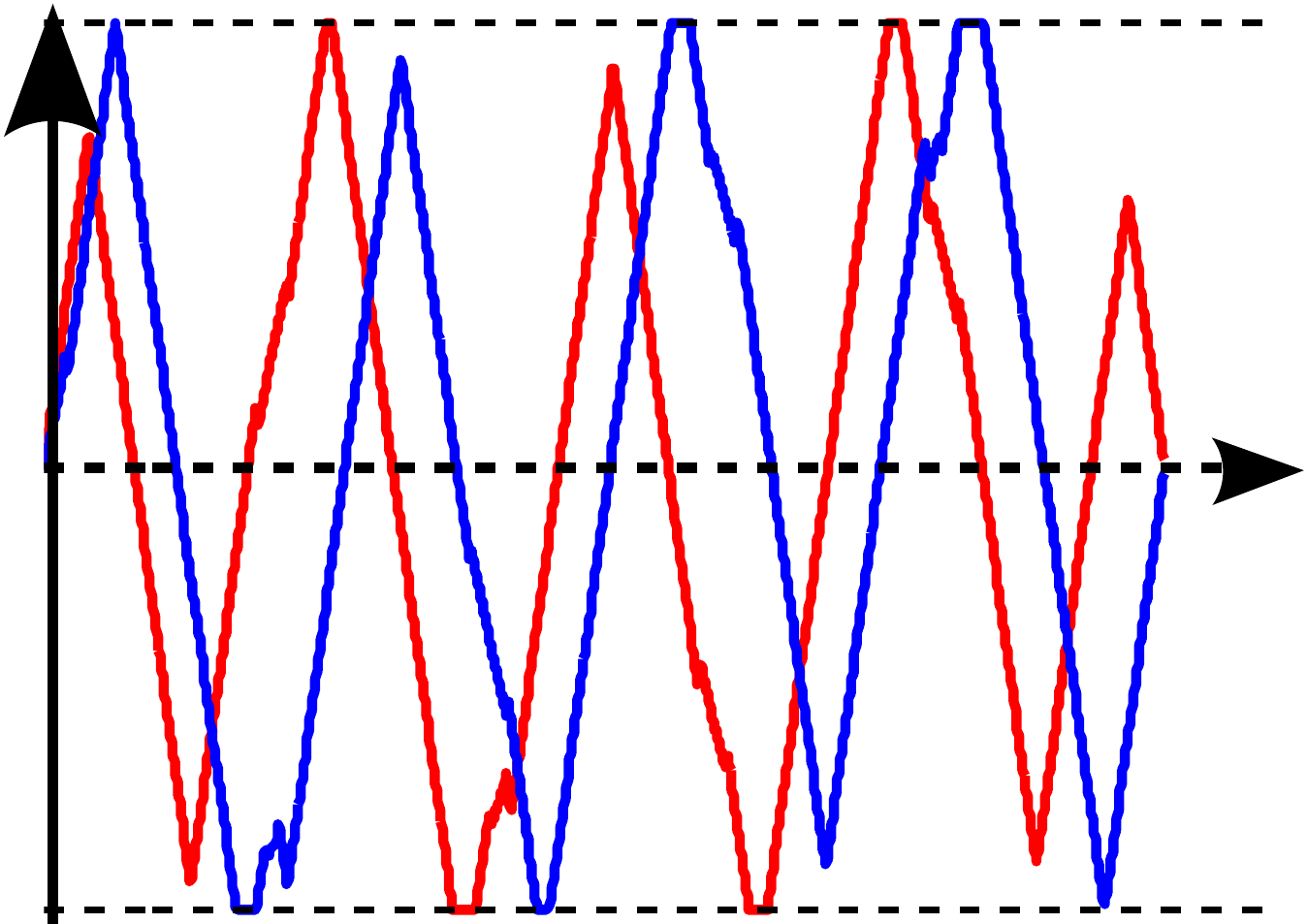}&
\includegraphics[width=.35\linewidth]{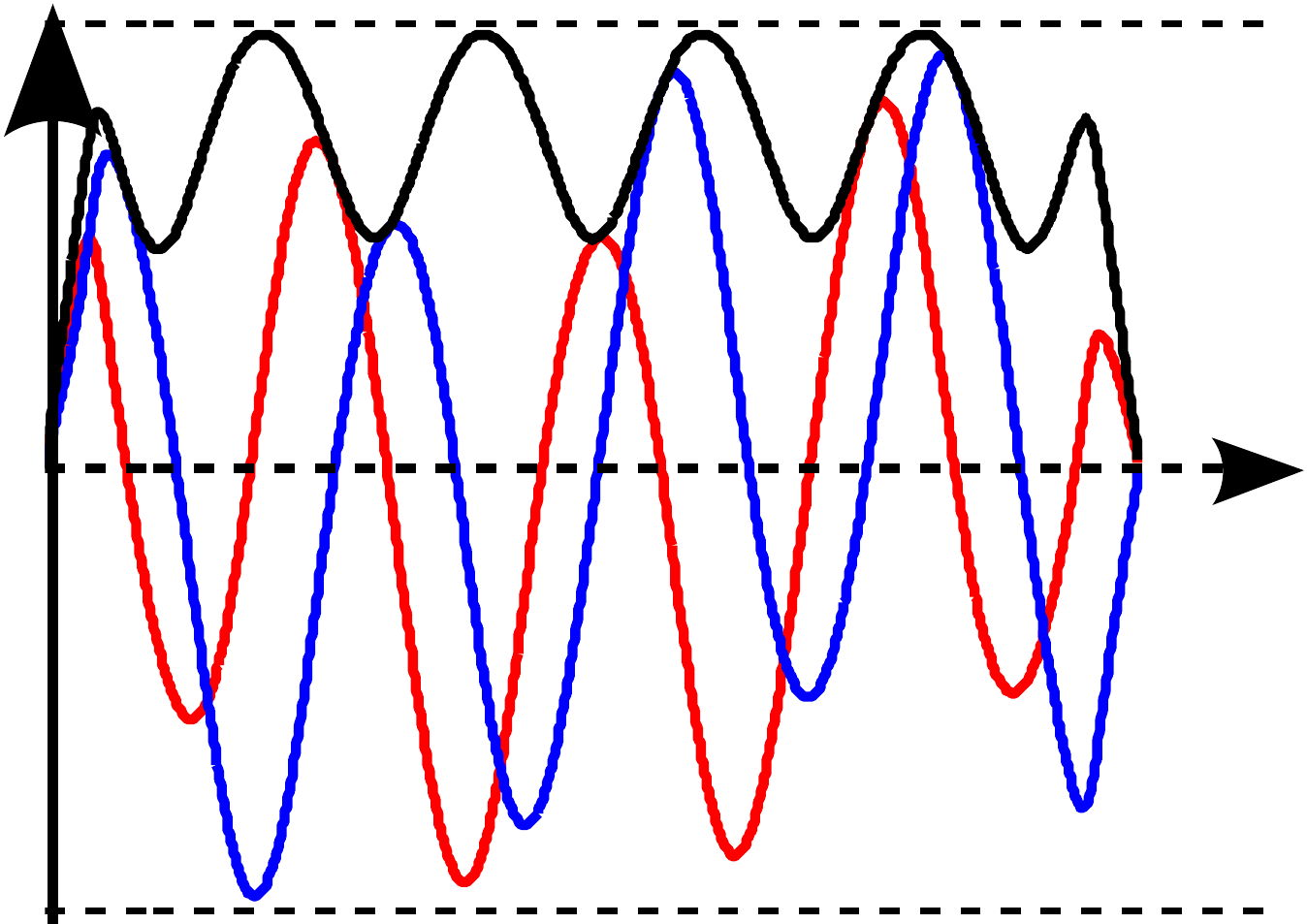}\\
 & {\scriptsize ($T$ = 4.94 ms)} & {\scriptsize ($T$ = 4.94 ms)} \\
\end{tabular}
\end{center}
\caption{\label{fig:rosette} Gradient waveforms to design rosette trajectories~\cite{Noll97}. Left (resp.~right), with Rotation-Variant~(RV) (resp.~Rotation-Invariant~(RIV)) constraints ($\|~\|=\|~\|_\infty$ (resp.~$=\|~\|_{\infty,2}$)). Corresponding gradient waveforms ($\color{red}{g_x(t)}$, $\color{blue}{g_y(t)}$, $\color{black}\|g(t)\|$). \textbf{Top row:} $k$-space trajectories between -6 and 6 cm$^{-1}$. \textbf{Middle row:} gradient waveforms corresponding to optimal reparameterization~\cite{Lustig08}. \textbf{Bottom row:} gradient waveforms corresponding to the proposed projection algorithm.}
\end{figure}

\subsection{TSP sampling}

Next, we performed similar experiments with a TSP trajectory, as introduced in~\cite{Chauffert13c}. We illustrate the results for RIV constraints, given an initial parameterization at 50~\% of the maximal speed. In Fig.~\ref{fig:TSPproj}(a), we notice that the output trajectory is a smoothed version of the initial piece-wise linear trajectory. Algthough the output curve has a slightly different support, the traversal time is reduced from 58~ms (optimal reparameterization that can be computed exactly) to 16~ms (Fig.~\ref{fig:TSPproj}(b-c)). 
The main reason is that TSP trajectories embody singular points, requiring the gradients to be set to zero for each of these points. Therefore, a sampling trajectory with singular points is time consuming. The main advantage of our algorithm is that the trajectory can be smoothed around these points, which saves a lot of acquisition time.

This second example shows that with existing methods, it is hopeless to implement TSP-based sequences in many MRI modalities, since the time to collect data can be larger than any realistic repetition time. In contrast, our method enables to traverse such curves in a reasonable time which can be tuned according to the image weighted~($T_1$, $T_2$, proton density).

\begin{figure}[!ht]
\begin{center}
\begin{tabular}{cl}
\begin{minipage}{.47\linewidth}
\begin{center}
(a) \\[.01 \linewidth]
\includegraphics[width=1.0\linewidth]{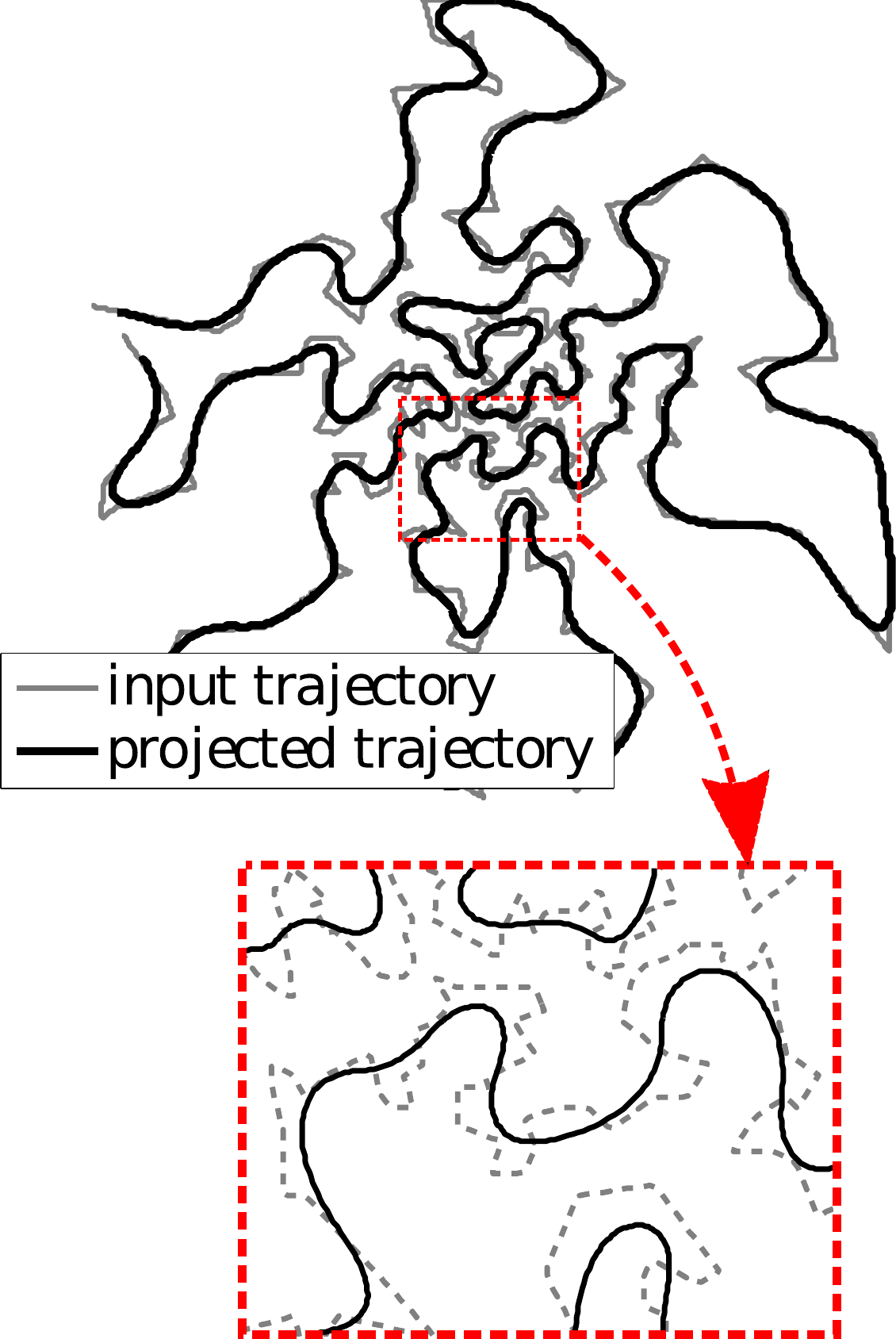}
\end{center}
\end{minipage}
&
\hspace{-.1\linewidth}
\begin{tabular}{c}
(b)\\[-.02 \linewidth]
\rotatebox{90}{\hspace{0.11\linewidth} $g(t)$} 
\hspace{-.03\linewidth}
\includegraphics[width=.5\linewidth]{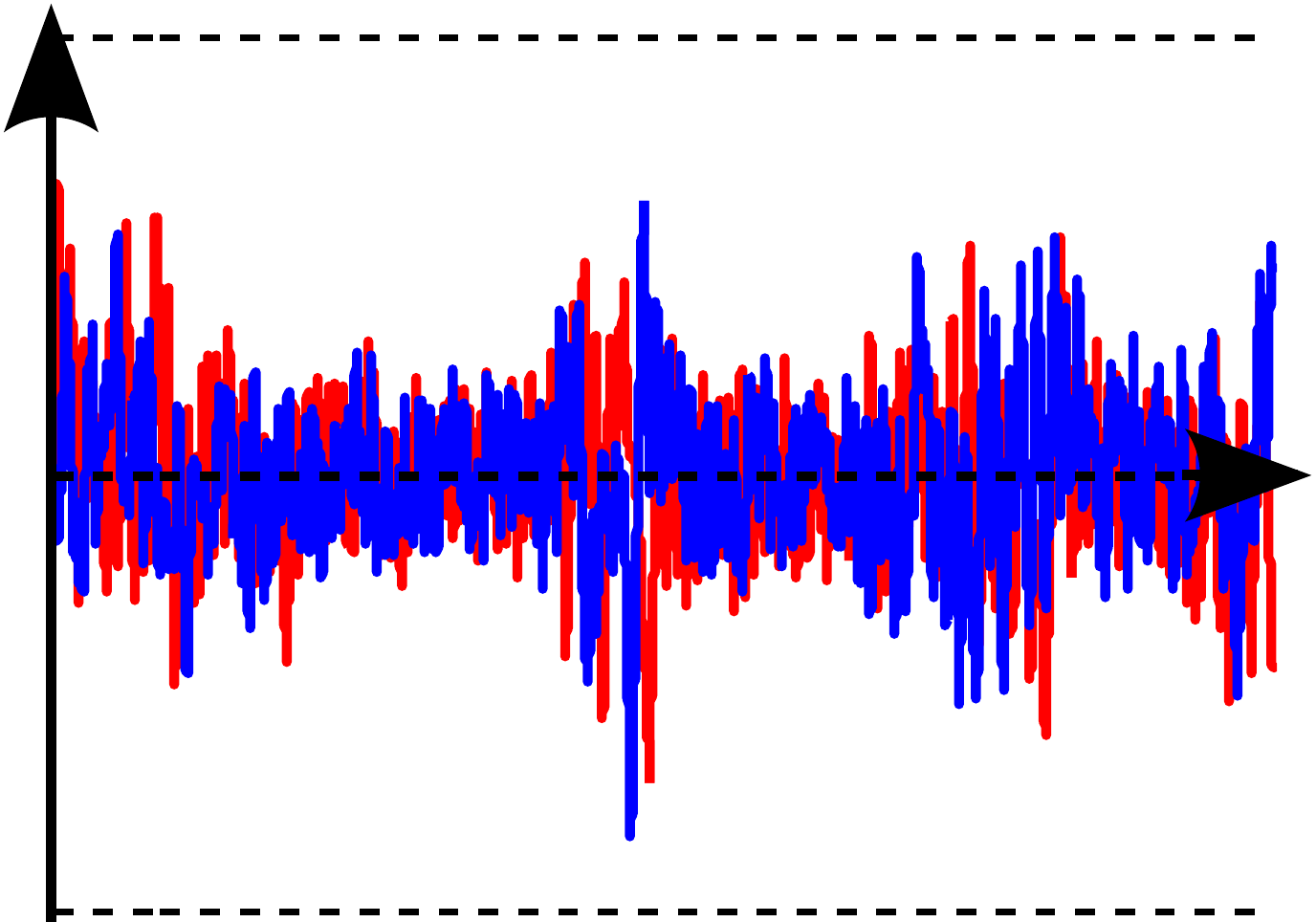}\\[-.02 \linewidth]
{\scriptsize ($T_{\texttt{Rep}}$ = 58 ms)} \\
(c)\\[-.02 \linewidth]
\rotatebox{90}{\hspace{0.12\linewidth} $g(t)$} 
\hspace{-.03\linewidth}
\includegraphics[width=.5\linewidth]{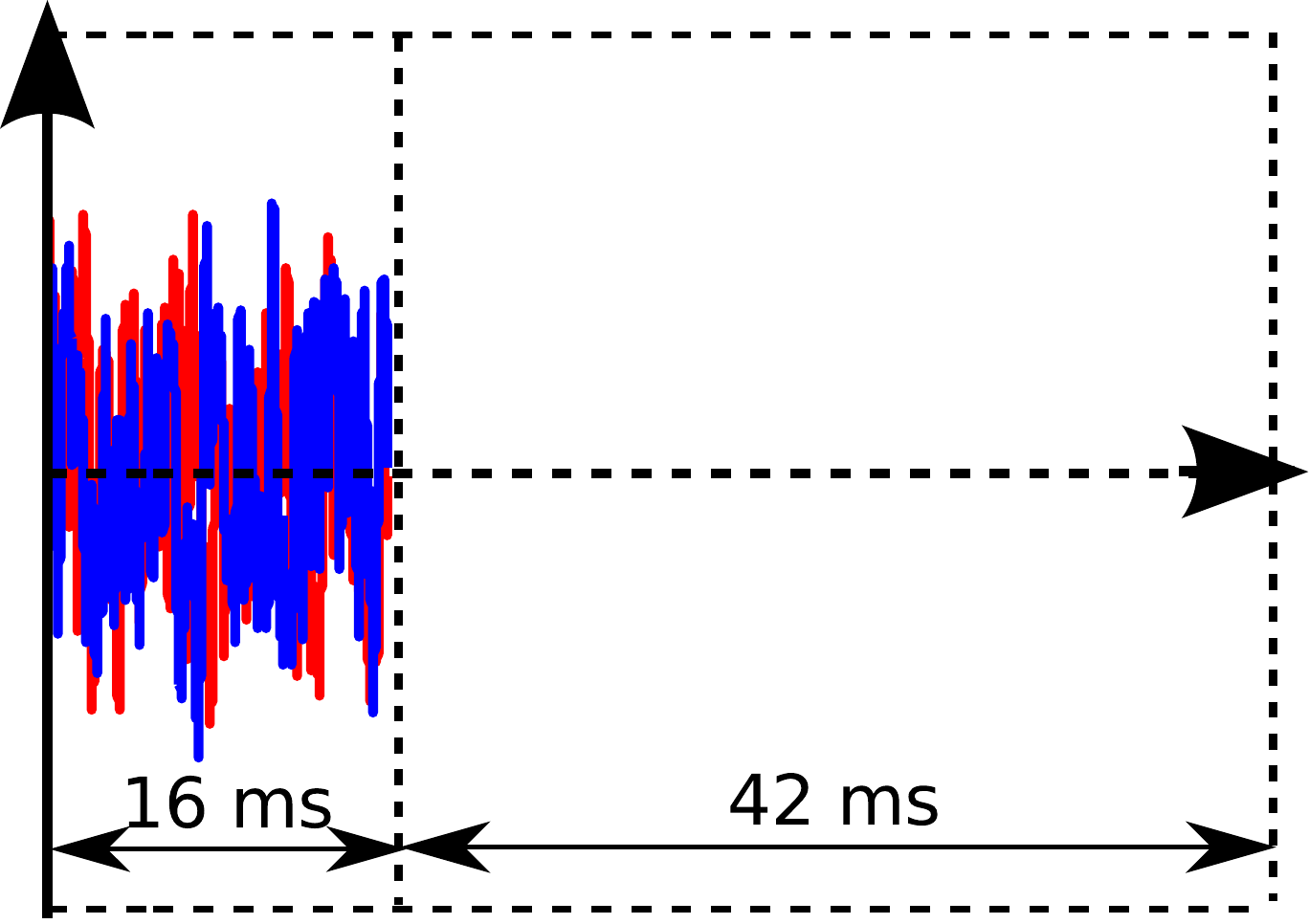}\\[-.02 \linewidth]
{\scriptsize ($T$ = 16 ms)} \\
\end{tabular}
\end{tabular}
\end{center}
\caption{\label{fig:TSPproj} Gradient waveforms to design TSP trajectories~\cite{Noll97}. (a): $k$-space trajectories between -6 and 6 cm$^{-1}$. (b): gradient waveforms ($g=(\color{red}{g_x},\color{blue}{g_y}$)) corresponding to optimal reparameterization~\cite{Lustig08}. (c): gradient waveforms corresponding to the proposed projection algorithm.}
\end{figure}

\section{Conclusion}

We developed an algorithm to project any parameterized curve onto the set of curves which can be implemented on actual MRI scanners. Our method is an alternative to the existing gradient waveform design based on optimal control. The main advantage of our approach is that one can reduce the time to traverse a curve if the latter has a lot of high curvature areas, like a solution of the TSP.

\section*{Acknowledgments}
This work benefited from the support of the FMJH Program Gaspard Monge in optimization and operation research, and from the support to this program from EDF.

\appendices

\section{Density deviation, control of $W_2$-distance.}
\label{sec:w2}

In Section~\ref{sec:VDS1}, we aim at controlling the Wasserstein distance $W_2(P_{s^*},\pi)$, where $\pi$ is a target fixed sampling distribution, and $P_{s^*}$ is the empirical distribution of the projected curve. We used the triangle inequlity~\eqref{eq:triangle} to bound this quantity by $W_2(P_{s^*},P_c)+ W_2(P_c,\pi)$. Here, we show that the quantity $W_2(P_c,\pi)$ can be as small as possible if $c$ is Variable Density Sampler~(VDS)~\cite{Chauffert14}. First, we define the concept of VDS, and then we provide two examples. Next, we show that if $c$ is a VDS,  $W_2(P_c,\pi)$ tends to 0 as the length of $c$ tends to infinity.

\subsection{Definition of a VDS}
First, we need to introduce the definition of weak convergence for measure:
\begin{definition}
A sequence of measures $\mu_n \in \mathcal{P}(K)$, the set of distributions defined over $K$, is said to weakly converge to $\mu$ if for any bounded continuous function $\phi$
\begin{align*}
\int_K \phi(x) \dd\mu_n(x) \to \int_K \phi(x) \dd\mu(x).
\end{align*}
We use the notation $\mu_n \rightharpoonup \mu$.
\end{definition}
According to~\cite{Chauffert14}, a (generalized) $\pi$-VDS is a set of times $T_n$, such that $T_n\to \infty$ when $n\to \infty$, and a sequence of curves $c_{T_n}:[0,T_n] \to \R^d$ such that $P_{c_{T_n}} \rightharpoonup \pi$ when $n$ tends to infinity. A consequence of the definition is that the relative time spent by the curve in a part of the $k$-space is proportional to its density. Before showing that this implies that $W_2(P_{c_{T_n}},\pi)$ tends to 0, we give two examples of VDS. 

\subsection{VDS examples}
\label{sec:examples}
We give two examples to design continuous sampling trajectories that match a given distribution. The two examples we propose provide a sequence of curves, hence a sequence of empirical measures, that weakly converge to the target density.

\subsubsection{Spiral sampling} The spiral-based variable density sampling is now classical in MRI~\cite{spielman1995magnetic}. For example, let $n\in\N$ be the number of revolutions and $r:[0,1]\mapsto\R^+$ a strictly increasing smooth function. Denote by $r^{-1}$ its inverse function. Define the spiral for $t\in [0,n]$ by $\displaystyle c_n(t)=r\Bigl(\frac{t}{n}\Bigr)\begin{pmatrix}\cos(2\pi t)\\ \sin(2\pi t) \end{pmatrix}$ and the target distribution $\pi$ by:
\begin{align*}
\pi(x,y)\!=\!\left\{\!\begin{array}{cl}
\frac{\overset{\text{\Large.}}{r^{-1}}\left( \sqrt{x^2+y^2}\right)}{2 \pi \int_{\rho=r(0)}^{r(1)} \overset{\text{\Large.}}{r^{-1}}(\rho)\rho \dd \rho} & \mbox{if} \quad r(0)\!\leqslant\! \sqrt{x^2\!+\!y^2}\!\leqslant \!r(1) \\
0 & \mbox{otherwise} \\
\end{array}
\right.
\end{align*}
then $P_{c_n}\rightharpoonup \pi$ when the number of revolutions $n$ tends to infinity.
\subsubsection{Travelling Salesman-based sampling} The idea of using the shortest path amongst a set of points (the ``cities") to design continuous trajectories with variable densities has been justified in~\cite{Chauffert13b,Chauffert14}. Let us draw $n$ $k$-space locations uniformly according to a density $q$ define over the $d$D $k$-space ($d=2$ or $3$), and join them by the shortest path (the Travelling Salesman solution). Then, denote by $c_n$ a constant-speed parameterization of this curve. Define the density:
\begin{align*}
\pi=\frac{q^{(d-1)/d}}{\int q^{(d-1)/d}(x)\dd(x)}
\end{align*}
Then $P_{c_n} \rightharpoonup \pi$ when the number of cities $n$ tends to infinity.

These two sampling strategies are efficient to cover the $k$-space according to target distributions, as depicted in Fig.~\ref{fig:exVDS}. For spiral sampling, the target distribution may be any 2D radial distribution, whereas the Travelling salesman-based sampling enable us to consider any 2D or 3D density.

\begin{figure}
\begin{center}
\begin{tabular}{cc}
(a)&(b)\\[-.01\linewidth]
\includegraphics[width=.3\linewidth]{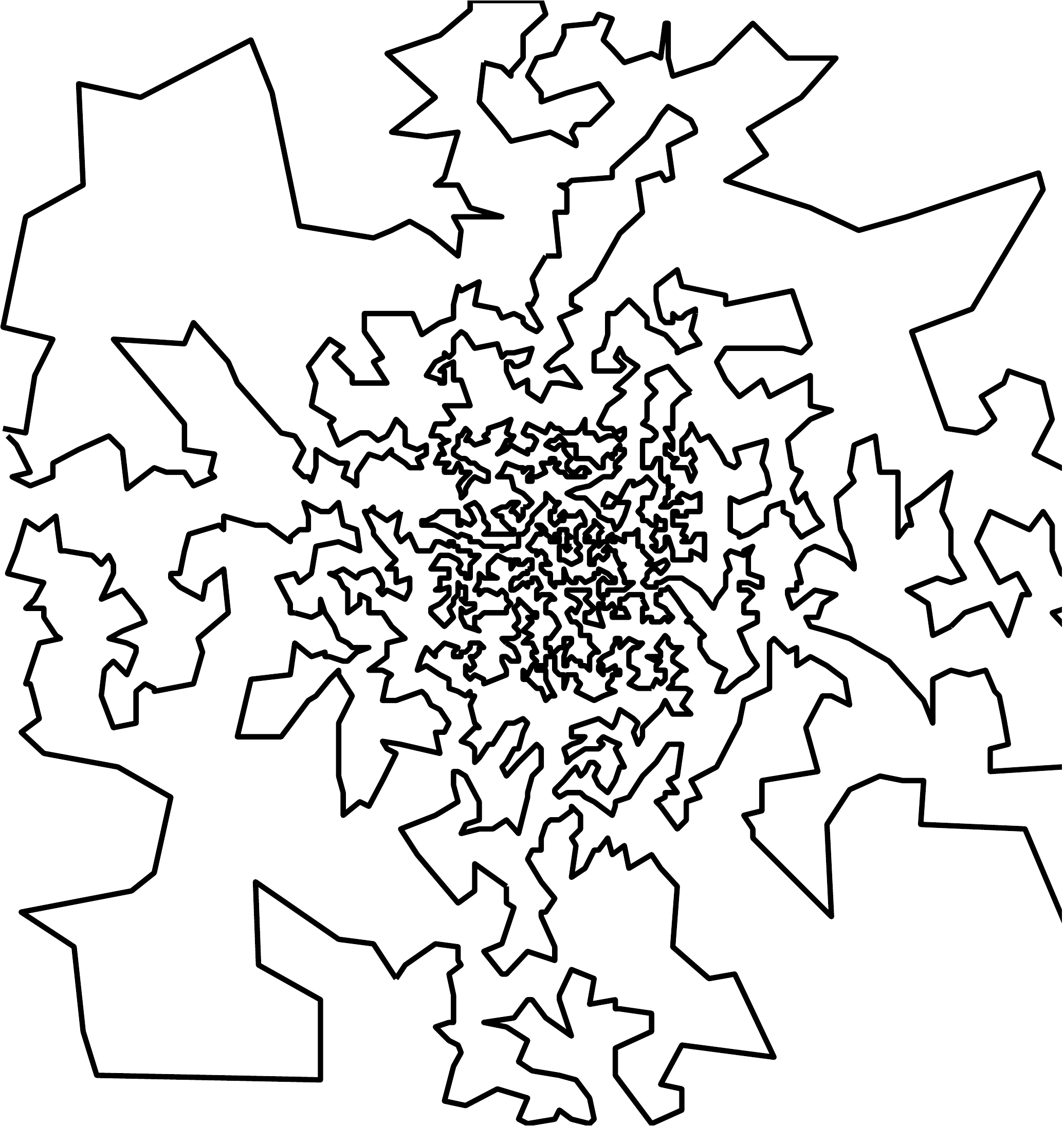}&
\includegraphics[width=.3\linewidth]{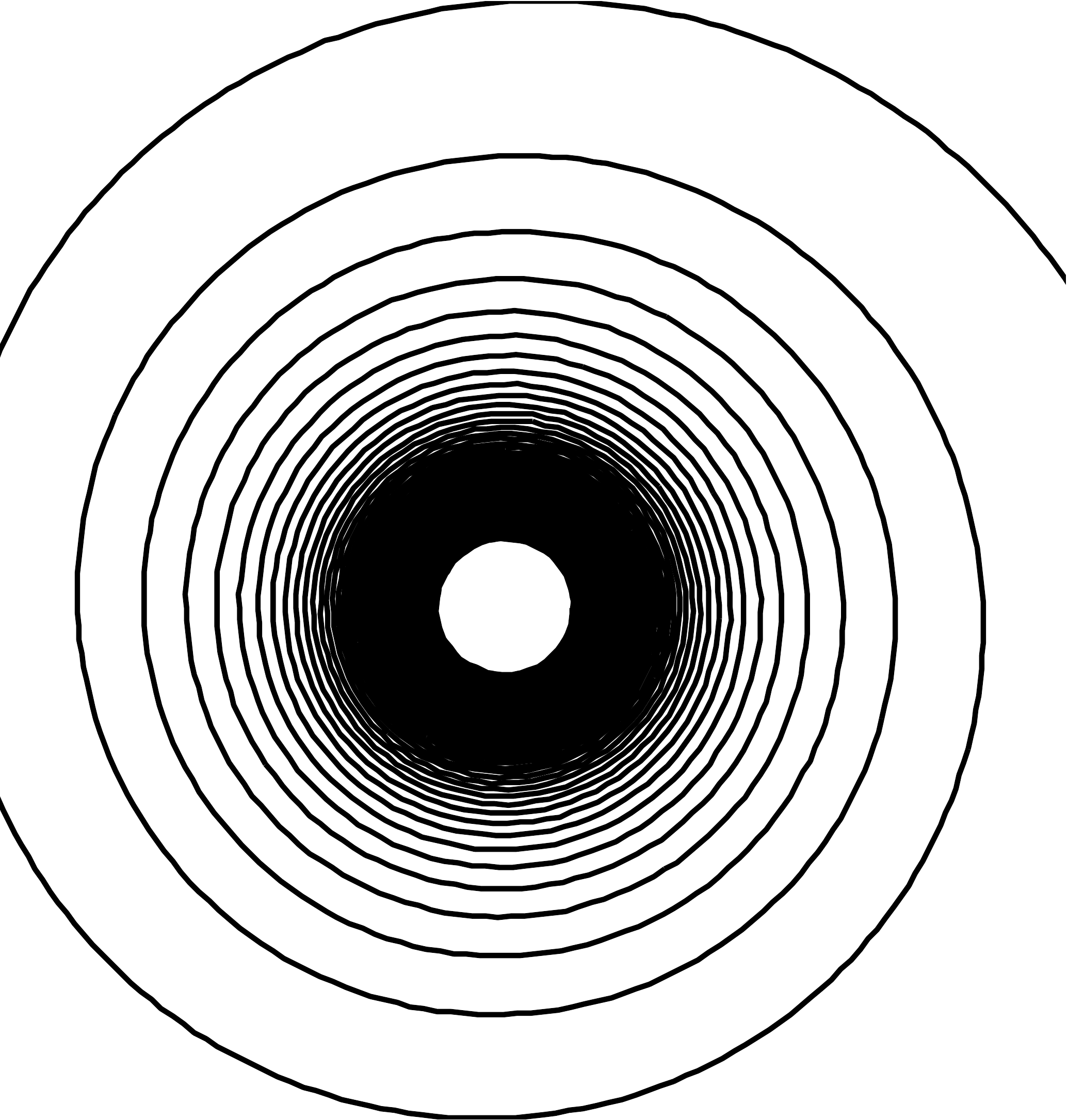}
\\
\end{tabular}
\end{center}
\caption{\label{fig:exVDS} Examples of Variable Density Sampling. (a): TSP trajectory with empirical distribution depicted in~Fig.~\ref{fig:sampling_distribution}(a).(b): spiral trajectory with empirical radial distribution such as~Fig.~\ref{fig:sampling_distribution}(b).}
\end{figure}

\subsection{Control of $W_2$ distance}
Let us now assume without loss of generality that $K=[-k_{\max}, k_{\max}]^d$. 

Let us recall a central result about $W_2$~(see e.g.,\cite{villani2008optimal}):
\begin{proposition}
Let $M\subset \R^d$, $\mu \in \mathcal{P}(M)$ and $\mu_n$ be a sequence of $\mathcal{P}(M)$.
Then, if $M$ is compact
\begin{align*} 
\mu_n \rightharpoonup \mu \Leftrightarrow W_2(\mu_n,\mu) \to 0
\end{align*} 
\end{proposition}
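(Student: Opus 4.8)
The plan is to prove the equivalence $\mu_n \rightharpoonup \mu \Leftrightarrow W_2(\mu_n,\mu) \to 0$ on a compact domain $M$, splitting into the two implications. The easy direction is $W_2(\mu_n,\mu)\to 0 \Rightarrow \mu_n \rightharpoonup \mu$. For this I would take an arbitrary bounded continuous $\phi$ on the compact set $M$; since $M$ is compact, $\phi$ is actually uniformly continuous, so for any $\varepsilon>0$ there is $\delta>0$ with $|\phi(x)-\phi(y)|\leq \varepsilon$ whenever $\|x-y\|\leq \delta$. Choosing a near-optimal coupling $\gamma_n\in\Pi(\mu_n,\mu)$ for $W_2(\mu_n,\mu)$, I would split the integral $\int (\phi(x)-\phi(y))\,\dd\gamma_n(x,y)$ over the regions $\{\|x-y\|\leq \delta\}$ and $\{\|x-y\|>\delta\}$; on the first region the integrand is at most $\varepsilon$, and on the second region Markov's inequality gives $\gamma_n(\|x-y\|>\delta)\leq W_2(\mu_n,\mu)^2/\delta^2 \to 0$, while $\phi$ is bounded. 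Letting $n\to\infty$ and then $\varepsilon\to 0$ yields $\int\phi\,\dd\mu_n\to\int\phi\,\dd\mu$.

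For the converse $\mu_n\rightharpoonup\mu \Rightarrow W_2(\mu_n,\mu)\to 0$, the strategy is to use compactness of the space of probability measures on the compact set $M$ in the weak topology (Prokhorov), together with the fact that $W_2$ metrizes weak convergence on a compact space. Concretely, I would argue by contradiction: if $W_2(\mu_n,\mu)\not\to 0$, there is $\varepsilon>0$ and a subsequence with $W_2(\mu_{n_k},\mu)\geq\varepsilon$. Since $M$ is compact, $\mathcal{P}(M)$ is weakly sequentially compact, so a further subsequence $\mu_{n_{k_j}}$ converges weakly to some $\nu\in\mathcal{P}(M)$; but by assumption $\mu_n\rightharpoonup\mu$, so $\nu=\mu$. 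Then applying the already-proven easy direction in reverse form — or rather, a lower semicontinuity / direct estimate — I would show $W_2(\mu_{n_{k_j}},\mu)\to 0$, contradicting $W_2(\mu_{n_{k_j}},\mu)\geq\varepsilon$. The cleanest way to get $W_2(\mu_{n_{k_j}},\mu)\to 0$ from weak convergence is to invoke that on a bounded (hence uniformly integrable in the second moment) set, weak convergence implies convergence in $W_2$; since $M$ is bounded, $\|x-y\|^2$ is a bounded continuous function on $M\times M$, and any weak limit of optimal couplings $\gamma_{n_{k_j}}$ is a coupling of $\mu$ with $\mu$, giving $\limsup_j W_2(\mu_{n_{k_j}},\mu)^2 \leq \int \|x-y\|^2\,\dd\gamma_\infty = W_2(\mu,\mu)^2 = 0$ after checking $\gamma_\infty$ has both marginals equal to $\mu$.

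The main obstacle, and the step requiring the most care, is the tightness/compactness argument extracting a weakly convergent subsequence of the optimal couplings $\gamma_n\in\Pi(\mu_n,\mu)\subset\mathcal{P}(M\times M)$ and identifying the limit's marginals. On a compact $M$, $M\times M$ is compact so Prokhorov gives tightness for free, but one still has to verify that weak convergence of $\gamma_n\rightharpoonup\gamma_\infty$ forces the marginals to pass to the limit (this follows by testing against functions of $x$ alone and of $y$ alone), and that the cost functional $\gamma\mapsto\int\|x-y\|^2\,\dd\gamma$ is continuous (here it is, since the cost is bounded continuous on the compact $M\times M$ — on a non-compact space one would only get lower semicontinuity). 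This is essentially the standard proof that $W_2$ metrizes the weak topology on a compact metric space, so I would either cite \cite{villani2008optimal} for the result or reproduce this short argument; given the paper's style, citing Villani and sketching the Markov-inequality direction in detail is the appropriate level.
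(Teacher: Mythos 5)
First, note that the paper itself gives no proof of this proposition: it is recalled as a known result with a pointer to \cite{villani2008optimal}, so your sketch is compared against the standard argument rather than against anything in the text. Your first direction is fine: uniform continuity of $\phi$ on the compact $M$, the split of $\int(\phi(x)-\phi(y))\,\dd\gamma_n$ over $\{\|x-y\|\leqslant\delta\}$ and its complement, and the Chebyshev bound $\gamma_n(\|x-y\|>\delta)\leqslant W_2(\mu_n,\mu)^2/\delta^2$ give $\bigl|\int\phi\,\dd\mu_n-\int\phi\,\dd\mu\bigr|\leqslant\varepsilon+2\|\phi\|_\infty W_2(\mu_n,\mu)^2/\delta^2$, which is all that is needed.

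The converse direction has a genuine gap at its final step. After extracting a weak limit $\gamma_\infty$ of the optimal couplings $\gamma_n\in\Pi(\mu_n,\mu)$ and checking that both marginals of $\gamma_\infty$ equal $\mu$, you conclude $\int\|x-y\|^2\,\dd\gamma_\infty=W_2(\mu,\mu)^2=0$. That equality does not follow: for a general element of $\Pi(\mu,\mu)$ the marginal condition only yields $\int\|x-y\|^2\,\dd\gamma_\infty\geqslant W_2(\mu,\mu)^2=0$, which is vacuous, and the product coupling $\mu\otimes\mu$ shows the cost of a self-coupling can be strictly positive whenever $\mu$ is not a Dirac mass. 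What you actually need is that $\gamma_\infty$ is an \emph{optimal} plan for $(\mu,\mu)$, hence concentrated on the diagonal with zero cost; this is the stability of optimal transport plans under weak convergence (preservation of cyclical monotonicity, see \cite{villani2008optimal}), a nontrivial ingredient you neither prove nor cite. Your contradiction/subsequence frame is harmless but redundant, since the whole content is exactly the implication ``weak convergence $\Rightarrow W_2\to 0$''. If you prefer to avoid stability, two elementary fixes work on a compact $M$: either bound $W_2(\mu_n,\mu)^2\leqslant \mathrm{diam}(M)\,W_1(\mu_n,\mu)$ and control $W_1$ via Kantorovich--Rubinstein duality plus an Arzel\`a--Ascoli argument on $1$-Lipschitz functions; or partition $M$ into cells of diameter at most $\delta$ with $\mu$-negligible boundaries, match the common mass within each cell (cost at most $\delta^2$ per unit mass) and ship the mismatch, whose total mass vanishes by the portmanteau theorem, at cost at most $\mathrm{diam}(M)^2$, giving $\limsup_n W_2(\mu_n,\mu)^2\leqslant\delta^2$ for every $\delta>0$. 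Alternatively, simply cite the metrization theorem in \cite{villani2008optimal}, which is precisely what the paper does.
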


An immediate consequence of this proposition and of the compactness of $K$ is the following proposition:
\begin{proposition}
\label{prop:5}
Let $(c_{T_n})_{n\geqslant 1}$ be a $\pi$-VDS, and $\varepsilon>0$. Then, there exists $n\geqslant 1$ such that $c_{T_n}:[0,T_n]\to K$ fulfills:
\begin{align*}
W_2(P_{c_{T_n}},\pi) \leqslant \varepsilon.
\end{align*}
\end{proposition}
To sum up, Proposition~\ref{prop:5} ensures that we can find an input curve which empirical distribution is as close to the target distribution $ \pi$ as we want.

\section{Proof of Proposition~\ref{prop:dual}}
\label{sec:proofp1}
\begin{definition}[indicator function]
Let $\bB\subseteq \R^n$. The indicator of $\bB$ is denoted $\indic_{\bB}$ and defined by:
\begin{align*}
\indic_{\bB}(x)=\left\{\begin{array}{lc}
0 \qquad &\mbox{if } \bx\in \bB \\
+\infty & \mbox{otherwise}
\end{array}
\right.
\end{align*}
\end{definition}

Let us now recall a classical result of convex optimization \cite[P. 195]{hiriart1996convex}:
\begin{proposition}
Let $\bB_\alpha=\{\bx\in \R^n, \|\bx\| \leqslant \alpha\}$. Then the following identity holds:
\begin{align*}
\indic_{\bB_\alpha}(\bx)=\sup_{\by \in \R^n} \ \langle \bx,\by \rangle - \alpha \|\by\|_*.
\end{align*}
\end{proposition}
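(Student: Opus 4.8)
The plan is to recognize the right-hand side as the Fenchel--Legendre conjugate of the map $\by \mapsto \alpha\|\by\|_*$ and to evaluate it directly by a two-case argument according to whether $\bx$ lies in the ball $\bB_\alpha$. The two ingredients I will rely on are both elementary in the finite-dimensional setting at hand: the generalized Cauchy--Schwarz inequality $\langle \bx,\by\rangle \leqslant \|\bx\|\,\|\by\|_*$, which is immediate from the definition $\|\by\|_* = \sup_{\|\bq\|\leqslant 1}\langle \bq,\by\rangle$ used earlier, and the biduality identity $\|\bx\| = \sup_{\|\by\|_*\leqslant 1}\langle \bx,\by\rangle$, i.e.\ $\|\cdot\|_{**}=\|\cdot\|$.

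First I would treat the case $\bx \in \bB_\alpha$, that is $\|\bx\|\leqslant\alpha$. For every $\by\in\R^n$ the generalized Cauchy--Schwarz inequality gives $\langle\bx,\by\rangle \leqslant \|\bx\|\,\|\by\|_* \leqslant \alpha\|\by\|_*$, whence $\langle\bx,\by\rangle - \alpha\|\by\|_* \leqslant 0$. The supremum is therefore at most $0$, and it is attained at $\by=0$, which yields exactly $0$. Hence the right-hand side equals $0 = \indic_{\bB_\alpha}(\bx)$.

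Next I would treat the case $\bx\notin\bB_\alpha$, i.e.\ $\|\bx\| > \alpha$. Using biduality, since $\|\bx\| = \sup_{\|\by\|_*\leqslant 1}\langle\bx,\by\rangle > \alpha$, I can pick a vector $\by_0$ with $\|\by_0\|_*\leqslant 1$ and $\langle\bx,\by_0\rangle > \alpha$. Evaluating the objective along the ray $t\by_0$, $t>0$, and using positive homogeneity of the norm gives $\langle\bx,t\by_0\rangle - \alpha\|t\by_0\|_* = t\bigl(\langle\bx,\by_0\rangle - \alpha\|\by_0\|_*\bigr) \geqslant t\bigl(\langle\bx,\by_0\rangle - \alpha\bigr)$, and the bracket is strictly positive by the choice of $\by_0$. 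Letting $t\to+\infty$ shows the supremum is $+\infty = \indic_{\bB_\alpha}(\bx)$, completing both cases.

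The only point requiring care is the biduality identity invoked in the second case, which is where a naive argument could stall. In full generality it follows from the bipolar theorem, or equivalently from Hahn--Banach separation of the point $\bx$ from the closed convex set $\bB_\alpha$, but I do not expect this to be the real obstacle here: the norms actually used in the paper are the concrete $\ell^\infty$ and $\ell^{\infty,2}$ norms, whose duals are the explicit $\ell^1$ and $\ell^{1,2}$ norms, so $\|\cdot\|_{**}=\|\cdot\|$ is a direct computation and the separating direction $\by_0$ can be written down by hand (for instance a sign-and-support vector in the $\ell^\infty$ case). Since everything lives in the finite-dimensional space $\R^n$, where all norms are equivalent and their unit balls are compact, no infinite-dimensional subtleties arise.
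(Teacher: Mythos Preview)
Your argument is correct. The paper itself does not supply a proof of this proposition: it is merely recalled there as ``a classical result of convex optimization'' with a page reference to Hiriart-Urruty and Lemar\'echal, and is then used as a black box in the derivation of the dual problem. Your two-case computation is exactly the standard direct verification that the Fenchel conjugate of $\by\mapsto\alpha\|\by\|_*$ is the indicator of the $\|\cdot\|$-ball of radius $\alpha$; the only nontrivial ingredient, the biduality $\|\cdot\|_{**}=\|\cdot\|$, is indeed automatic in $\R^n$ and, as you observe, can also be checked by hand for the concrete $\ell^\infty$ and $\ell^{\infty,2}$ norms actually used. So your write-up adds a self-contained justification where the paper simply cites.
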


Now, we can prove Proposition~\ref{prop:dual}.
\begin{align*}
& \min_{\bs\in \bcS\cap \bcA} \frac{1}{2} \|\bs-\bc\|_2^2 \\
& = \min_{\bs\in \bcA} \frac{1}{2} \|\bs-\bc\|_2^2 +  \indic_{\bB_\alpha}(\D \bs)+  \indic_{\bB_\beta}(\DD \bs)\\
& = \min_{\bs\in \bcA} \frac{1}{2} \|\bs-\bc\|_2^2 + \sup_{\bq_1,\bq_2\in \R^{n\cdot d}} \langle \D \bs,\bq_1\rangle - \alpha\|\bq_1\|_* \nonumber \\
& \hspace{.5\linewidth} +\langle \DD \bs,\bq_2\rangle - \beta\|\bq_2\|_\star \\
& = \sup_{\bq_1,\bq_2\in \R^{n\cdot d}}  \min_{\bs\in \bcA} \frac{1}{2} \|\bs-\bc\|_2^2 + \langle \bs,\D^* \bq_1\rangle  +\langle \bs,\DD^* \bq_2\rangle \nonumber \\
& \hspace{.5\linewidth} - \alpha\|\bq_1\|_* - \beta\|\bq_2\|_* \label{eq:dual_relation} 
\end{align*}
The relationship between the primal and dual solutions reads $\displaystyle \bs^* = \argmin_{\bs\in \bcA} \frac{1}{2} \|\bs-\bc\|_2^2 + \langle \bs,\D^* \bq_1^*\rangle  +\langle \bs,\DD^* \bq_2^*\rangle $.
The $\sup$ and the $\min$ can be interverted at the third line, due to standard theorems in convex analysis (see e.g. \cite[Theorem 31.3]{rockafellar1997convex}).

\section{Proof of Propositions~\ref{prop:regularityF} and \ref{prop:defsstar}}
\label{sec:proofp2}

To show Proposition \ref{prop:defsstar}, first remark that 
\begin{align*}
& \argmin_{\bs \in \bcA} \langle \D\bs, \bq_1 \rangle + \langle \DD\bs, \bq_2  \rangle + \frac {1}{2} \|\bs-\bc\|_2^2 \\
&=  \argmin_{\bs \in \bcA} \frac {1}{2} \|\bs-( \bc - \D \bq_1 - \DD^* \bq_2   ) \|_2^2.
\end{align*}
Therefore, $\bs^*(\bq_1,\bq_2)$ is the orthogonal projection of $\bz = \bc - \D \bq_1 - \DD^* \bq_2 $ onto $\bcA$. 
Since $\bcA$ is not empty, $\bA\bA^+\bv=\bv$, and the set $\bcA=\{\bs\in \R^{n\cdot d}, \bA \bs=\bv\}$ can be rewritten as 
\begin{equation*}
\bcA= \bA^+\bv + \mathrm{ker}(\bA).
\end{equation*}
The vector $\bz-\bs^*(\bq_1,\bq_2)$ is orthogonal to $\bcA$, it therefore belongs to $\mathrm{ker}(\bA)^\perp=\mathrm{im}(\bA^*)$. 
Thus $\bs^*(\bq_1,\bq_2) = \bz + \bA^* \blambda$ for some $\blambda$ such that:
\begin{equation*}
\bA (\bz + \bA^* \blambda) = \bv.
\end{equation*}
This leads to $\blambda= (\bA\bA^*)^{-1}(\bv - \bA\bz)$.
We finally get
\begin{equation*}
\bs^*(\bq_1,\bq_2) = \bz + \bA^* (\bA\bA^*)^{-1}(\bv - \bA\bz),
\end{equation*}
ending the proof.

Proposition~\ref{prop:regularityF} results from standard results of convex optimization. See for instance \cite[Theorem 1]{Nesterov2005} or the book \cite{hiriart1996convex}.

\section{Proof of theorem \ref{thm:cvrate}.}
\label{sec:prthm}

The analysis proposed here follows closely ideas proposed in \cite{weiss2009efficient,Boyer14,beck2009gradient,beck2014fast}.
We will need two results. The first one is a duality result from \cite{Boyer14}.
\begin{proposition}\label{prop:distdual}
Let $f:\R^m\to \R\cup\{\infty\}$ and $g:\R^n\to \R\cup \{\infty\}$ denote two closed convex functions. 
Assume that $g$ is $\sigma$-strongly convex \cite{hiriart1996convex} and that $\bA\mathrm{ri}(\mathrm{dom}(f)) \cap \mathrm{ri}(\mathrm{dom}(g))\neq\emptyset$.
Let $\bA\in \R^{m\times n}$ denote a matrix.

Let $p(\bx)=f(\bA\bx)+g(\bx)$ and $d(\by)=-g^*(\bA^*\by) - f^*(\by)$. 
Let $\bx^*$ be the unique minimizer of $p$ and $\by^*$ be any minimizer of $d$. 

Then $g^*$ is differentiable with $\frac{1}{\sigma}$ Lipschitz-continuous gradient. 
Moreover, by letting $\bx(\by) = \nabla g^*(-\bA^* \by)$:
$$
\|\bx(\by)-\bx^*\|_2^2 \leq \frac{2}{\sigma} (d(\by) - d(\by^*)).
$$
\end{proposition}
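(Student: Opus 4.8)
The plan is to combine three standard ingredients: the smoothness--strong-convexity conjugate correspondence, Fenchel--Rockafellar strong duality, and the subgradient characterization of $\sigma$-strong convexity. First I would record the regularity of $g^*$. Since $g$ is proper, closed and $\sigma$-strongly convex, its conjugate $g^*$ is finite and differentiable on all of $\R^n$ with $\frac{1}{\sigma}$-Lipschitz gradient, and $\nabla g^*(\bu)$ is the unique maximizer of $\bx\mapsto \langle \bu,\bx\rangle - g(\bx)$ (see \cite{hiriart1996convex}). In particular $\bx(\by)=\nabla g^*(-\bA^*\by)$ is well defined and single-valued, and it satisfies the stationarity relation $-\bA^*\by \in \partial g(\bx(\by))$, i.e. $\bx(\by)=\argmin_{\bx}\{g(\bx)+\langle \bA^*\by,\bx\rangle\}$. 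This settles the first assertion (regularity of $g^*$) and identifies the recovery map.

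Next I would invoke Fenchel--Rockafellar duality. The qualification condition guarantees strong duality together with attainment of the dual, so that $p(\bx^*)=d(\by^*)$, where $\by^*$ is the dual optimizer and $\bx^*$ the minimizer of $p=f\circ\bA+g$. Uniqueness of $\bx^*$ is free of charge: $f\circ\bA$ is convex and $g$ is $\sigma$-strongly convex, hence $p$ is $\sigma$-strongly convex.

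The core of the argument is a single chain of inequalities. Using $-\bA^*\by\in\partial g(\bx(\by))$ and $\sigma$-strong convexity of $g$ evaluated between $\bx(\by)$ and $\bx^*$ gives
\begin{equation*}
\tfrac{\sigma}{2}\|\bx(\by)-\bx^*\|_2^2 \le g(\bx^*)-g(\bx(\by))+\langle \by,\bA\bx^*-\bA\bx(\by)\rangle.
\end{equation*}
Separately, rewriting $g^*(-\bA^*\by)$ through its maximizer $\bx(\by)$ yields the identity $d(\by)=-f^*(\by)+g(\bx(\by))+\langle\by,\bA\bx(\by)\rangle$; combined with the strong-duality value $d(\by^*)=p(\bx^*)=f(\bA\bx^*)+g(\bx^*)$, a direct computation shows that $d(\by^*)-d(\by)$ equals the right-hand side of the displayed bound plus the extra term $f(\bA\bx^*)+f^*(\by)-\langle\by,\bA\bx^*\rangle$. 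The Fenchel--Young inequality applied to $f$ at the pair $(\bA\bx^*,\by)$ shows that this extra term is nonnegative, so the right-hand side above is at most $d(\by^*)-d(\by)$. Chaining the two estimates gives $\|\bx(\by)-\bx^*\|_2^2\le \frac{2}{\sigma}(d(\by^*)-d(\by))$, which is the asserted bound with the dual gap oriented so as to be nonnegative at the optimizer.

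The main obstacle, and essentially the only place where care is needed, is the bookkeeping of signs and conjugation conventions: one must check that $\bx(\by)=\nabla g^*(-\bA^*\by)$ is exactly the maximizer entering $g^*(-\bA^*\by)$, that the stated qualification condition delivers strong duality with dual attainment in this orientation, and---crucially---that the leftover term obtained after subtracting the strong-convexity lower bound from the dual gap is precisely the (nonnegative) Fenchel--Young gap of $f$. Once these conjugate identities are aligned, no genuine estimation remains: the inequality is exact up to the strong-convexity constant $\sigma$, and the $\bA$-dependence cancels because $g$ alone carries the curvature.
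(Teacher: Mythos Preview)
The paper does not give its own proof of this proposition: it is quoted as a known duality result from \cite{Boyer14} and then used as a black box in the proof of Theorem~\ref{thm:cvrate}. There is therefore nothing in the paper to compare your argument against. That said, your proof is correct and is the standard route: identify $\bx(\by)$ as the maximizer defining $g^*(-\bA^*\by)$, use the subgradient form of $\sigma$-strong convexity at $\bx(\by)$ and $\bx^*$, and then split the dual gap $d(\by^*)-d(\by)$ into that strong-convexity term plus the Fenchel--Young remainder $f(\bA\bx^*)+f^*(\by)-\langle\by,\bA\bx^*\rangle\ge 0$. Your final remark about the sign orientation is on point: as written in the paper, $d(\by)=-g^*(\bA^*\by)-f^*(\by)$ is concave, so calling $\by^*$ a ``minimizer'' and writing $d(\by)-d(\by^*)$ on the right-hand side is a sign slip; the intended convex object is $g^*(-\bA^*\by)+f^*(\by)$ (this is exactly how the paper instantiates it a few lines later in the proof of Theorem~\ref{thm:cvrate}), and with that convention your bound $\frac{\sigma}{2}\|\bx(\by)-\bx^*\|_2^2\le d(\by)-d(\by^*)$ is the stated inequality.
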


The second ingredient is the standard convergence rate for accelerated proximal gradient descents given in \cite{beck2009gradient}.
\begin{proposition}
Under the same assumptions as Proposition \ref{prop:distdual}, consider Algorithm~\ref{algo2:max}.
\begin{algorithm}[!ht]
\KwIn{$\bq_0 \in \mathrm{ri}(\mathrm{dom}(f^*)) \cap A\mathrm{ri}(\mathrm{dom}(g^*))$.\\ $n_{it}$}
\textbf{Initialize} 
Set $t=1/L$, with $L=\frac{|||\bA|||^2}{\sigma}$. \\
Set $\by_0=\bq_0$.
\For{$k=1\hdots n_{it}$}{
  $\bq^{(k)}=\prox_{t f^*}(\by^{(k-1)} + t \bA\nabla g^*(-\bA^* \by^{(k-1)}))$\\
  $\by^{(k)}=\bq^{(k)}+\frac{k-1}{k+2}(\bq^{(k)}-\bq^{(k-1)})$\\
}
\label{algo2:max}
\caption{Accelerated proximal gradient descent}
\end{algorithm}

Then $\|\by^{(n_{it})}- \by^*\|_2^2= \mathcal{O}\left( \frac{|||\bA|||^2}{\sigma \cdot n_{it}^2} \right).$
\end{proposition}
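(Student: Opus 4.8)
My plan is to observe that Algorithm~\ref{algo2:max} is exactly the accelerated proximal gradient scheme (FISTA) of \cite{beck2009gradient} applied to the dual composite objective, so that the proof reduces to checking the hypotheses of the Beck--Teboulle estimate and tracking the constants. I would write the function the algorithm minimizes as
\[
\Phi(\by) := h(\by) + f^*(\by), \qquad h(\by) := g^*(-\bA^*\by),
\]
so that, up to the sign conventions used in defining $d$, $\Phi$ is the negative dual objective and its minimizer is exactly $\by^*$. First I would confirm that the two inner updates coincide with FISTA for $\Phi$. By the chain rule $\nabla h(\by) = -\bA\,\nabla g^*(-\bA^*\by)$, so the gradient step $\by^{(k-1)} - t\,\nabla h(\by^{(k-1)}) = \by^{(k-1)} + t\,\bA\,\nabla g^*(-\bA^*\by^{(k-1)})$ is precisely the argument fed to $\prox_{t f^*}$ in the first line, and the extrapolation weight $\tfrac{k-1}{k+2}$ is the FISTA momentum attached to the sequence $t_k=\tfrac{k+1}{2}$, which satisfies the admissibility condition $t_{k+1}^2-t_{k+1}\le t_k^2$ needed for the $\mathcal{O}(1/k^2)$ rate.

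Next I would verify the smoothness requirement and pin down the step size. Proposition~\ref{prop:distdual} already gives that $g^*$ is differentiable with $\tfrac{1}{\sigma}$-Lipschitz gradient. Composing with the linear map $-\bA^*$ and premultiplying by $\bA$ yields, for all $\by_1,\by_2$,
\[
\|\nabla h(\by_1)-\nabla h(\by_2)\|_2 \le |||\bA|||\cdot\tfrac{1}{\sigma}\cdot|||\bA^*|||\;\|\by_1-\by_2\|_2 = \tfrac{|||\bA|||^2}{\sigma}\,\|\by_1-\by_2\|_2,
\]
using $|||\bA^*|||=|||\bA|||$. Hence $h$ has $L$-Lipschitz gradient with $L=|||\bA|||^2/\sigma$, matching the constant and step $t=1/L$ prescribed in the algorithm. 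The qualification $\bq_0\in\mathrm{ri}(\mathrm{dom}(f^*))\cap \bA\,\mathrm{ri}(\mathrm{dom}(g^*))$ together with the constraint qualification in Proposition~\ref{prop:distdual} guarantees that $\nabla g^*$ and $\prox_{t f^*}$ are well defined along the whole trajectory and that a dual minimizer $\by^*$ exists.

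With these facts established, the Beck--Teboulle theorem applies verbatim and delivers the objective-gap bound
\[
\Phi(\bq^{(n_{it})}) - \Phi(\by^*) \le \frac{2L\,\|\bq_0-\by^*\|_2^2}{(n_{it}+1)^2}.
\]
Since $\|\bq_0-\by^*\|_2^2$ is a fixed constant independent of $n_{it}$ and $L=|||\bA|||^2/\sigma$, the right-hand side is $\mathcal{O}\!\left(\tfrac{|||\bA|||^2}{\sigma\, n_{it}^2}\right)$, which is the stated rate. I would emphasize that the quantity the standard analysis genuinely controls is this dual objective gap; this is precisely the quantity that Proposition~\ref{prop:distdual} then converts into a bound on $\|\bx(\by^{(n_{it})})-\bx^*\|_2^2$. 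I would therefore phrase the conclusion in objective-gap form (the form in which it is used in the proof of Theorem~\ref{thm:cvrate}), the displayed distance-squared notation being read as this gap with the constant $\|\bq_0-\by^*\|_2^2$ absorbed into the $\mathcal{O}$.

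The steps are essentially bookkeeping, so the main obstacle is notational rather than analytic: one must check that $\Phi$ is a proper closed convex function bounded below (so $\by^*$ exists), that the momentum sequence $t_k=\tfrac{k+1}{2}$ is admissible, and — the most delicate point — that the Lipschitz constant of $\nabla h$ is \emph{exactly} $|||\bA|||^2/\sigma$, since it is this identity that makes the prescribed step $t=1/L$ valid and fixes the constant in the final $\mathcal{O}$. Everything else is a direct instantiation of the FISTA analysis in \cite{beck2009gradient}.
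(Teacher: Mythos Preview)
Your proposal is correct and matches the paper's approach: the paper does not prove this proposition at all but simply quotes it as ``the standard convergence rate for accelerated proximal gradient descents given in \cite{beck2009gradient}'', and in the proof of Theorem~\ref{thm:cvrate} uses it exactly in the objective-gap form $d(\by^{(k)})-d(\by^*)=\mathcal{O}(1/k^2)$ that you derive. Your added verification of the Lipschitz constant $L=|||\bA|||^2/\sigma$ and your remark that the displayed quantity should be read as the dual objective gap (rather than an iterate distance, which FISTA does not control) are both apt and consistent with how the paper actually invokes the result.
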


To conclude, it suffices to set $g(\bs)=\frac{1}{2}\|\bs-\bc\|_2^2$, $f(\bq_1,\bq_2) = \indic_{\bB_\alpha}(\bq_1) + \indic_{\bB_\alpha}(\bq_2)$ and $\bA=\begin{pmatrix} \D \\ \DD\end{pmatrix}$.
By doing so, the projection problem rewrites $\displaystyle\min_{\bs\in \R^{n\dot d}} p(\bs) = f(A\bs) + g(\bs)$. Its dual problem \eqref{eq:PD} can be rewritten more compactly as $\displaystyle\min_{\bq=(\bq_1,\bq_2)\in \R^{n\dot d} \times \R^{n\dot d}} d(\bq) = g^*(-\bA^*\bq) + f^*(\bq)$. Note that function $g$ is $1$-strongly convex. Therefore, Algorithm \ref{algo2:max} can be used to minimize $d$, ensuring a convergence rate in $\mathcal{O}\left( \frac{1}{k^2} \right)$ on the function values $d(\by^{(k)})$. 
It then suffices to use Proposition~\ref{prop:distdual} to obtain a convergence rate on the distance to the solution $\|\bs^{(k)} - \bs^*\|_2^2$.



\bibliographystyle{ieeetr}
\bibliography{./Biblio/bibenabr,./Biblio/revuedef,./Biblio/revueabr,./Biblio/MyBiblio}

\end{document}